\newcommand{\bg}{\begin{equation}}
\newcommand{\ed}{\end{equation}}
\newcommand{\bga}{\begin{eqnarray}}
\newcommand{\eda}{\end{eqnarray}}
\def\cbdu{\par{\raggedleft$\Box$\par}}
\newtheorem {Theorem}  {Theorem}
\numberwithin{Theorem}{section}
\newtheorem {Lemma}[Theorem]  {Lemma}
\newtheorem {Proposition}[Theorem]{Proposition}
\theoremstyle{definition}
\theoremstyle{remark}
\chardef\csname pre amssym.def
\def\undefine#1{\let#1\undefined}
\def\newsymbol#1#2#3#4#5{\let\next@\relax
 \ifnum#2=\@ne\let\next@\msafam@\else
 \ifnum#2=\tw@\let\next@\msbfam@\fi\fi
 \mathchardef#1="#3\next@#4#5}
\def\mathhexbox@#1#2#3{\relax
 \ifmmode\mathpalette{}{\m@th\mathchar"#1#2#3}%
 \else\leavevmode\hbox{$\m@th\mathchar"#1#2#3$}\fi}
\def\hexnumber@#1{\ifcase#1 0\or 1\or 2\or 3\or 4\or 5\or 6\or 7\or 8\or
 9\or A\or B\or C\or D\or E\or F\fi}
\font\teneufm=eufm10 \font\seveneufm=eufm7 \font\fiveeufm=eufm5
\newcounter{remark}
\renewcommand{\k}{\kappa}
\renewcommand{\div}{\mbox{div}}
\def  \12  {{\frac{1}{2}}}
\def\build#1_#2^#3{\mathrel{\mathop{\kern 0pt#1}\limits_{#2}^{#3}}}
\numberwithin{equation}{section}
\begin{document}

\title[EMHD DSS solutions]{Discretely self-similar solutions for electron MHD}


\author [Nada Adzic Vukotic]{Nada Adzic Vukotic}

\address{Department of Mathematics, Statistics and Computer Science, University of Illinois at Chicago, Chicago, IL 60607, USA}
\email{nadzic2@uic.edu} 

\author [Mimi Dai]{Mimi Dai}

\address{Department of Mathematics, Statistics and Computer Science, University of Illinois at Chicago, Chicago, IL 60607, USA}
\email{mdai@uic.edu}

\thanks{The authors are partially supported by the NSF grant DMS--2308208 and Simons Foundation. }

\begin{abstract}
We study discretely self-similar solutions for the electron magnetohydrodynamics (MHD) without resistivity. Under several different decay and non-decay conditions, we show the absence of non-trivial discretely self-similar blowup solutions. 

\bigskip

KEY WORDS: magnetohydrodynamics; Hall effect; discretely self-similar solutions; singularity.

\hspace{0.02cm}CLASSIFICATION CODE: 35Q35, 76B03, 76D09, 76E25, 76W05.
\end{abstract}

\maketitle

\section{Introduction}

\medskip

\subsection{Overview}

The non-resistive electron magnetohydrodynamics (MHD) 
\begin{equation}\label{emhd}
\begin{split}
B_t+ \nabla\times ((\nabla\times B)\times B)&=0,\\
\nabla\cdot B&= 0
\end{split}
\end{equation}
is an important model in plasma physics to capture the rapid magnetic reconnection phenomena due to the Hall effect. It is obtained from the full non-resistive MHD system with Hall effect when the ion motion is assumed to be stationary and only the electron motion is considered (cf. \cite{ADFL, Bis1}). The unknown vector $B$ is the magnetic field which satisfies the Gauss law for magnetism, the second equation of \eqref{emhd}.  The nonlinear term in (\ref{emhd}) derived from the Hall effect is referred as the Hall term (cf. \cite{BDS}), which is rather singular compared to the nonlinear term of the well-known Euler equation. Understanding the dynamics of the nonlinear Hall term is a great challenge. In particular, when resistivity is absent, that is there is no dissipation term $\Delta B$ in the equation, the solution of \eqref{emhd} (if exists) is expected to be singular. In this paper, we study a particular type of singular solutions for \eqref{emhd},  the discretely self-similar solutions. 


To introduce self-similar solutions for the electron MHD, we start by considering the scaling of the equation. 
Assume $B(x,t)$ is a solution of \eqref{emhd} with the initial data $B_0(x,t)$. One can check that, for any parameter $\lambda\in \mathbb R$,  the rescaled function $B_\lambda$ defined as
\begin{equation}\label{scale}
B_\lambda(x,t)=\lambda^\alpha B(\lambda x, \lambda^{\alpha+2}t), \ \ \forall \ \ \alpha\in \mathbb R
\end{equation}
is a solution to \eqref{emhd} as well, but with the rescaled initial data $\lambda^\alpha B_0(\lambda x)$. 

If the rescaled function $B_\lambda$ in \eqref{scale} is a solution for any $\lambda>1$, it is referred a self-similar solution; if the solution satisfies the rescaling for only one $\lambda>1$, it is called a discretely self-similar solution. 


In light of the scaling \eqref{scale}, we can consider solutions to \eqref{emhd} in the self-similar form with respect to the potential singularity point $(0,0)$
\begin{equation}\label{self-blowup}
B(x,t)=\frac{1}{(-t)^{\frac{\alpha}{\alpha+2}}}H\left( \frac{x}{(-t)^{\frac{1}{\alpha+2}}}, s\right), \ \ \alpha\neq -2
\end{equation}
with $s=-\log (-t)$. Denote $y=\frac{x}{(-t)^{\frac{1}{\alpha+2}}}$. As $t\to0$, the self-similar variables $s\to \infty$ and $|y|\to \infty$.
The profile vector field $H(y,s)$ satisfies the system
\begin{equation}\label{eq-H}
\begin{split}
\partial_s H+\frac{\alpha}{\alpha+2}H+\frac{1}{\alpha+2} (y\cdot \nabla) H+\nabla\times ((\nabla\times H)\times H)=&\ 0,\\
\nabla\cdot H=&\ 0.
\end{split}
\end{equation}
The existence of a non-trivial solution $H$ of \eqref{eq-H} corresponds to the existence of a solution $B$ of \eqref{emhd} in the form \eqref{self-blowup} that blows up at time $0$.  The solution $B$ is self-similar if and only if $H$ is independent $s$. While $B$ is discretely self-similar with some $\lambda>1$ if and only if $H$ is periodic in $s$, that is, 
\[H(y,s)=H(y, s+S_0), \ \ \forall \ (y,s)\in \mathbb R^{3+1}\]
with $S_0=(\alpha+2)\log \lambda$. Indeed, if $B$ is discretely self-similar with $\lambda>1$, it follows from \eqref{scale} that 
\[B_{\lambda^n}(x,t)=\lambda^{n\alpha} B(\lambda^n x, \lambda^{n(\alpha+2)}t)\]
for any integer $n$ is a solution. Letting $\lambda^{n(\alpha+2)}(-t)=1$, we have $-t=\lambda^{-n(\alpha+2)}$ and hence
\[s=-\log(-t)= -\log(\lambda^{-n(\alpha+2)})=n(\alpha+2)\log\lambda\]
which indicates $S_0=(\alpha+2)\log\lambda$ is the time period. 

Self-similar and discretely self-similar solutions have been investigated for hydrodynamic equations, for instance for Euler equations in \cite{Chae11, Chae15, CT}.
Self-similar solutions for the electron MHD \eqref{emhd} were studied in \cite{Dai-GW}, where the authors obtained several criteria that exclude the existence of non-trivial self-similar blowup solutions. As a continuation, we focus on discretely self-similar solutions for the electron MHD in this article and study the general equation \eqref{eq-H}. The goal is to identity conditions, with both spatial decay and non-decay types, which can rule out the existence of 
discretely self-similar blowup solutions.

\medskip

\subsection{Main results}

Denote the current profile $J=\nabla\times H$.
Depending on the different values of the scaling parameter $\alpha$, we first state a few results where the conditions on the profile $H$ and $J$ indicate certain decay property.

\begin{Theorem}\label{thm-1}
Let $\alpha\neq -2$ and $\nu=0$. Assume  
\begin{itemize}
\item [(i)] $H\in C_y^2C_s^1(\mathbb R^{3+1})$,
\item [(ii)] $J(y)=o(|y|)$ as $|y|\to\infty$,
\item [(iii)] for some $r>0$, 
\[H\in \cap_{0<q<r} L^q(\mathbb R^3\times [0,S_0]),\]
\end{itemize}
then $H\equiv 0$ on $\mathbb R^{3+1}$. 
\end{Theorem}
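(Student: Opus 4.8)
The plan is to prove a Liouville-type statement by deriving a space--time energy identity for the profile $H$ over one period, in which the Hall nonlinearity drops out by an exact algebraic cancellation, and then using the $s$-periodicity of $H$ to force the $L^2$ energy to vanish.

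First I would dot the first equation of \eqref{eq-H} with $H$. The only delicate term is the Hall term, and after integration by parts it cancels pointwise in the bulk: using $J=\nabla\times H$ and the identity $H\times(J\times H)=|H|^2J-(J\cdot H)H$, one checks that $H\cdot\big(\nabla\times(J\times H)\big)=-\nabla\cdot\big(H\times(J\times H)\big)$, since $(J\times H)\cdot J=0$. Combining this with $H\cdot\partial_sH=\tfrac12\partial_s|H|^2$ and $(y\cdot\nabla)|H|^2=\nabla\cdot(y|H|^2)-3|H|^2$, dotting the first equation of \eqref{eq-H} with $H$ yields the divergence-form identity
\[
\partial_s\frac{|H|^2}{2}+\nabla\cdot\Big(\tfrac{1}{2(\alpha+2)}\,y\,|H|^2-H\times(J\times H)\Big)=\frac{3-2\alpha}{2(\alpha+2)}\,|H|^2 .
\]
This is the heart of the matter: it isolates the scaling coefficient $\tfrac{3-2\alpha}{2(\alpha+2)}$ and packages everything else into a space--time divergence.

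Next I would integrate this identity over $\mathbb R^3\times[0,S_0]$, localizing in space with a cutoff $\phi(y/R)$ and letting $R\to\infty$. The time boundary terms $\int_{\mathbb R^3}\tfrac12\big(|H(\cdot,S_0)|^2-|H(\cdot,0)|^2\big)\,dy$ vanish by the $S_0$-periodicity of $H$. For the spatial flux, the integrand is bounded by $(\,|y|+|J|\,)|H|^2$; hypothesis (ii), $J=o(|y|)$, lets me absorb the Hall part of the flux into the transport part, so the flux through $\partial B_R$ is $\lesssim R\int_{\partial B_R}|H|^2\,dS$. Since the integrability hypothesis (iii) (together with the regularity (i)) places $H$ in $L^2(\mathbb R^3\times[0,S_0])$, a coarea/$\liminf$ argument produces a sequence $R_k\to\infty$ along which this flux tends to $0$. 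Passing to the limit leaves $\tfrac{3-2\alpha}{2(\alpha+2)}\,\|H\|_{L^2(\mathbb R^3\times[0,S_0])}^2=0$, and when $\alpha\neq\tfrac32$ the coefficient is nonzero, forcing $H\equiv0$ (hence everywhere, by continuity).

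The main obstacle is twofold. The routine-looking but genuinely delicate point is the vanishing of the spatial flux at infinity: this is exactly where (ii) and (iii) are consumed, and one must verify that the available integrability and the growth bound on $J$ really do kill the boundary contribution along a subsequence. The second obstacle is the degenerate exponent $\alpha=\tfrac32$, where the scaling coefficient vanishes and the identity above is uninformative. I expect to handle this case by repeating the computation with the weighted multiplier $|y|^{2\gamma}H$: the weight shifts the coefficient to $\tfrac{2\alpha-3-2\gamma}{2(\alpha+2)}$, which is nonzero for any $\gamma>0$ when $\alpha=\tfrac32$, while the extra Hall contribution it generates is $\int(J\times H)\cdot(\nabla|y|^{2\gamma}\times H)$, a term containing no derivatives of $H$ and of size $o(1)\,|y|^{2\gamma}|H|^2$ at infinity by (ii), hence absorbable into the weighted energy. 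Carrying the weighted estimate through requires the stronger decay that (iii) supplies.
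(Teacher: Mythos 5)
Your pointwise identity is correct: dotting \eqref{eq-H} with $H$ does turn the Hall term into the exact divergence $-\nabla\cdot\bigl(H\times(J\times H)\bigr)$, and this is the same cancellation the paper itself exploits in the proof of Theorem \ref{thm-2}. The fatal step is the sentence ``the integrability hypothesis (iii) (together with the regularity (i)) places $H$ in $L^2$.'' Hypothesis (iii) only says $H\in L^q(\mathbb R^3\times[0,S_0])$ for all $0<q<r$ for \emph{some} $r>0$, and $r$ may well be smaller than $2$; membership in all small-exponent Lebesgue spaces plus pointwise smoothness does not imply $L^2$ (a smooth field made of increasingly tall bumps with rapidly shrinking supports near spatial infinity lies in every $L^q$ with, say, $q<\tfrac12$, yet has infinite $L^2$ norm). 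When $r\le 2$ your flux bound $\lesssim\int_0^{S_0}\int_{R\le|y|\le 2R}|H|^2\,dyds$ controls a quantity you cannot show is finite, let alone tending to zero along a sequence $R_k\to\infty$, and your concluding identity $\frac{3-2\alpha}{2(\alpha+2)}\,\|H\|_{L^2(\mathbb R^3\times[0,S_0])}^2=0$ is vacuous because that norm may be infinite. This is exactly why the paper does not test with $H$ itself: it tests \eqref{eq-H2} with $|H|^{q-2}H\varphi_R$ for an exponent $q$ strictly below $r$, so that every boundary/error term is an annulus integral of $|H|^q$, which assumption (iii) really does send to zero as $R\to\infty$, while the surviving coefficient is $a-\frac{3b}{q}=\frac{\alpha q-3}{q(\alpha+2)}$.

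The small-$q$ multiplier also removes your second obstacle for free: for $q$ small one has $a-\frac{3b}{q}\neq 0$ (indeed $|a-\frac{3b}{q}|\to\infty$ as $q\to 0$) for every $\alpha\neq-2$, so the degenerate exponent $\alpha=\tfrac32$ never arises, whereas in your $q=2$ scheme it forces the weighted multiplier $|y|^{2\gamma}H$. That fallback inherits the same defect in amplified form: it requires $\int_0^{S_0}\int_{\mathbb R^3}|y|^{2\gamma}|H|^2\,dyds<\infty$ and vanishing weighted annulus fluxes, which is even further from what (iii) supplies; moreover, even granting all of that, the extra Hall contribution is only $o(1)|y|^{2\gamma}|H|^2$ \emph{outside} a large ball, so absorbing it leaves an interior term and yields a bound on the weighted energy rather than its vanishing. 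The repair is to rerun your computation with the multiplier $|H|^{q-2}H\varphi_R$ for small $q$ --- accepting that for $q\neq 2$ the Hall term no longer cancels completely and leaves the term $\bigl(\tfrac{H}{|H|}\cdot\nabla\bigr)J\cdot\tfrac{H}{|H|}\,|H|^q$, which must then be estimated --- i.e., essentially the paper's own proof.
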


\medskip

\begin{Theorem}\label{thm-2}
Let $p>\frac65$. 
Assume either
\begin{itemize}
\item [(i)] $\alpha>\frac32$ and $H\in L^3_s W_y^{1,p}$, or
\item [(ii)] $-2<\alpha<\frac32$ and $H\in L^2_s L^2_y\cap L^3_sW_y^{1,p}$,
\end{itemize}
then $H\equiv 0$ on $\mathbb R^{3+1}$. 
\end{Theorem}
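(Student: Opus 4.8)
The plan is to run an $L^2$ energy estimate on the profile equation \eqref{eq-H}, exploiting that the Hall term is energy-neutral when tested against $H$. Writing $J=\nabla\times H$, I would pair \eqref{eq-H} with $H\varphi_R^2$, where $\varphi_R(y)=\varphi(y/R)$ is a smooth, radially nonincreasing cutoff equal to $1$ on $B_R$ and supported in $B_{2R}$, and integrate over $\mathbb R^3$. The time term gives $\frac12\frac{d}{ds}\int|H|^2\varphi_R^2$; the damping term gives $\frac{\alpha}{\alpha+2}\int|H|^2\varphi_R^2$; and the dilation term, after integrating by parts, gives $-\frac{3}{2(\alpha+2)}\int|H|^2\varphi_R^2$ plus a commutator $-\frac{1}{\alpha+2}\int|H|^2\varphi_R\,(y\cdot\nabla\varphi_R)$ supported on the annulus $A_R=\{R\le|y|\le2R\}$. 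The decisive point is the nonlinear term: after integration by parts,
\[
\int\nabla\times(J\times H)\cdot H\varphi_R^2
=\int(J\times H)\cdot J\,\varphi_R^2+\int(J\times H)\cdot(\nabla\varphi_R^2\times H),
\]
and the leading integrand vanishes identically because $J\times H\perp J$. Thus only a cutoff error, bounded by $\frac{C}{R}\int_{A_R}|J||H|^2$, survives.

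Collecting terms and integrating in $s$ over one period $[0,S_0]$, periodicity kills the $s$-derivative, leaving
\[
\frac{2\alpha-3}{2(\alpha+2)}\int_0^{S_0}\!\!\int|H|^2\varphi_R^2
+\frac{1}{\alpha+2}\int_0^{S_0}\!\!\int|H|^2\varphi_R\,|y\cdot\nabla\varphi_R|
=-\int_0^{S_0}\!\!\int(J\times H)\cdot(\nabla\varphi_R^2\times H),
\]
where I used $y\cdot\nabla\varphi_R\le0$ for a radial cutoff, so the dilation commutator has a sign. In case (i), $\alpha>\frac32$ makes the coefficient $\frac{2\alpha-3}{2(\alpha+2)}$ strictly positive, so both left-hand terms are nonnegative and $\frac{2\alpha-3}{2(\alpha+2)}\int_0^{S_0}\int|H|^2\varphi_R^2\le\frac{C}{R}\int_0^{S_0}\int_{A_R}|J||H|^2$. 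If the right-hand side tends to $0$ along a sequence $R\to\infty$, then by monotone convergence $\int_0^{S_0}\|H\|_2^2=0$, hence $H\equiv0$; note this requires no a priori $L^2$ bound, which is why hypothesis (i) omits one. In case (ii) the coefficient is negative, so I instead invoke $H\in L_s^2L_y^2$: as $R\to\infty$ the first term converges to $\frac{2\alpha-3}{2(\alpha+2)}\int_0^{S_0}\|H\|_2^2$, the dilation commutator tends to $0$ (it is controlled by the vanishing $L^2$-tail $\int_{A_R}|H|^2$), and, provided the nonlinear error again vanishes, $\alpha\neq\frac32$ keeps the coefficient nonzero and forces $\int_0^{S_0}\|H\|_2^2=0$.

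The main obstacle is precisely the far-field nonlinear term $\frac1R\int_0^{S_0}\int_{A_R}|J||H|^2\,dy\,ds\to0$. Here is where $p>\frac65$ enters: Sobolev embedding gives $H\in L_s^3L_y^{p^*}$ with $p^*=\frac{3p}{3-p}>2$, while $J=\nabla\times H\in L_s^3L_y^{p}$, and the time integration closes by Hölder in $s$ ($L^3_s$ against $L^{3/2}_s$). For the spatial integral I would apply Hölder, splitting the two factors of $H$ between $L^{p^*}$ and an interpolated exponent — using the extra integrability $H\in L^2$ in case (ii), and absorbing part of the cubic term into the nonnegative left-hand side in case (i) — and then exploit that the annular norms $\|J\|_{L^p(A_R)}$ and $\|H\|_{L^{p^*}(A_R)}$ are tails of finite quantities, hence vanish as $R\to\infty$ (passing to a dyadic sequence of radii via a good-radius argument if a borderline exponent arises). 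Checking that this interpolation genuinely closes all the way down to $p>\frac65$, rather than stalling at the naive threshold $p\ge\frac95$ dictated by $J\in L^p,\ H\in L^{p^*}$ alone, is the delicate quantitative heart of the argument and the step I expect to demand the most care.
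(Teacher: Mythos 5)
Your proposal follows the paper's proof essentially line for line: the same radially non-increasing cutoff, periodicity killing the $\partial_s$ term, the sign $y\cdot\nabla\varphi_R\le 0$ for the dilation commutator, the vanishing of the leading Hall contribution because $((\nabla\times H)\times H)\cdot(\nabla\times H)=0$, the two-radii monotonicity trick in case (i), the use of the $L^2_sL^2_y$ tail in case (ii), and the reduction of everything to showing that the annular error $\frac{1}{R}\int_0^{S_0}\int_{R\le|y|\le 2R}|\nabla H|\,|H|^2\,dy\,ds$ vanishes as $R\to\infty$. So the route is identical; the only question is whether the step you yourself flag as "the delicate quantitative heart" can be closed.

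It cannot be closed below $p=\frac{9}{5}$, and your instinct that $\frac{9}{5}$ is the true threshold of this method is correct, so your write-up does contain a genuine gap on the range $\frac65<p<\frac95$. Any H\"older--Sobolev bookkeeping for $\int_{A_R}|\nabla H||H|^2$ uses exponents $\frac1p+\frac1{p_1}+\frac1{p_2}=1$ together with a bound $\|H\|_{L^{2p_1}(A_R)}\lesssim\|H\|_{W^{1,p}(A_R)}$ that is uniform in $R$, which forces $2p_1\le p^*=\frac{3p}{3-p}$, i.e. $\frac1{p_1}\ge\frac{6-2p}{3p}$. But then $1=\frac1p+\frac1{p_1}+\frac1{p_2}\ge\frac1p+\frac{6-2p}{3p}=\frac{9-2p}{3p}$, which gives $p\ge\frac95$ even if one allows $p_2=\infty$; the additional requirement $p_2\ge 3$ (needed so that $R^{3/p_2-1}$ stays bounded) only tightens this. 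The extra hypothesis $H\in L^2_sL^2_y$ in case (ii) does not rescue anything: pairing $\nabla H\in L^p_y$ against $|H|^2$ requires $H\in L^{2p/(p-1)}_y$, and $\frac{2p}{p-1}\le p^*$ is again exactly $p\ge\frac95$, while substituting an $L^2$ factor for an $L^{p^*}$ factor makes the balance strictly worse ($p\ge\frac{12}{5}$). Indeed, for $p<\frac95$ the integrand $|\nabla H|\,|H|^2$ need not even be locally integrable under the stated hypotheses. What you could not have known is that the paper's own proof stumbles at precisely this point: it asserts that for every $p>\frac65$ one can choose $p_1,p_2>1$ with $\frac1p+\frac1{p_1}+\frac1{p_2}=1$, $\frac1{p_1}\ge\frac{6-2p}{3p}$ and $\frac3{p_2}-1\le0$, but the computation above shows no such exponents exist unless $p>\frac95$; the condition $p>\frac65$ merely makes the interval $[\frac{6-2p}{3p},1)$ nonempty and ignores its interaction with the H\"older identity. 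So the gap you honestly flagged is real, in your proposal and in the paper alike: as this method stands, Theorem \ref{thm-2} is established only for $p\ge\frac95$.
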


\begin{Theorem}\label{thm-3}
Let $H\in C_y^2C_s^1(\mathbb R^{3+1})$ be a time periodic solution of \eqref{eq-H} with period $S_0>0$ satisfying 
\begin{equation}\label{decay}
\lim_{|y|\to\infty} \sup_{0<s<S_0}\left(|J(y,s)|+ |\nabla J(y,s)|\right)=0.
\end{equation}
In addition, we assume 
either
\begin{itemize}
\item [(i)] $\alpha>0$ and there exists $0<q<\frac3\alpha$, or
\item [(ii)] $-2<\alpha\leq 0$ and there exists $q>0$,
\end{itemize}
such that $H\in L^q(\mathbb R^3\times [0,S_0])$.
 Then $H\equiv 0$ on $\mathbb R^{3+1}$.
\end{Theorem}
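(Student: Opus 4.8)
The plan is to run an energy estimate for the time-periodic profile, testing the equation against a power of $H$ and integrating over one full period $[0,S_0]$, so that the $\partial_s$ term drops out by periodicity. First I would recast the Hall nonlinearity in divergence form: since $\nabla\cdot H=0$ and $\nabla\cdot J=0$ (because $J=\nabla\times H$), one has $\nabla\times((\nabla\times H)\times H)=(H\cdot\nabla)J-(J\cdot\nabla)H$, so \eqref{eq-H} becomes
\[
\partial_s H+\tfrac{\alpha}{\alpha+2}H+\tfrac{1}{\alpha+2}(y\cdot\nabla)H+(H\cdot\nabla)J-(J\cdot\nabla)H=0.
\]
Multiplying by $|H|^{q-2}H$ and integrating over $\mathbb R^3\times[0,S_0]$, the term $\tfrac1q\int\!\!\int\partial_s|H|^q$ vanishes by periodicity, and the transport term drops as well, since $|H|^{q-2}H\cdot(J\cdot\nabla)H=\tfrac1q\,J\cdot\nabla|H|^q$ integrates to $-\tfrac1q\int\!\!\int|H|^q\,\nabla\cdot J=0$. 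Integrating $(y\cdot\nabla)|H|^q$ by parts, the damping and scaling terms combine into a definite multiple of $\|H\|_{L^q}^q$, leaving the single identity
\[
\frac{3-q\alpha}{q(\alpha+2)}\,\|H\|_{L^q(\mathbb R^3\times[0,S_0])}^q=\int_0^{S_0}\!\!\int_{\mathbb R^3}|H|^{q-2}\,(H\otimes H){:}\,\nabla J\,dy\,ds.
\]
The two hypotheses are precisely what force the prefactor to be strictly positive: under (i) one has $q\alpha<3$, under (ii) $q\alpha\le 0<3$, while $\alpha+2>0$ throughout.

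It then remains to show the right-hand side vanishes, and this is the heart of the matter. For the borderline exponent $q=2$ the integrand is $(H\otimes H){:}\nabla J$, which after one integration by parts (again using $\nabla\cdot H=\nabla\cdot J=0$) reduces to a pure boundary term controlled by $\int_{|y|=R}|J||H|^2$; the decay $\sup_s|J|\to0$ together with $H\in L^2$ then drives it to zero along a sequence of radii, and the identity yields $H\equiv0$ whenever $\alpha\ne\tfrac32$. To arrive at this clean case I would first upgrade integrability: rewriting the constraint as $-\Delta H=\nabla\times J$ and applying interior elliptic (Poisson) estimates on unit balls, bootstrapped through Sobolev embedding, the decay $\sup_s(|J|+|\nabla J|)\to0$ together with $H\in L^q$ gives $H\in L^\infty$ and $H(y,s)\to0$ as $|y|\to\infty$ uniformly in $s$. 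Interpolating $L^q\cap L^\infty$ then places $H\in L^2$ whenever $q\le 2$, which covers the large-$\alpha$ regime of (i) and (ii) and reduces those cases to the $q=2$ identity.

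The main obstacle is the residual bulk term $\int\!\!\int|H|^{q-2}(H\otimes H){:}\nabla J$ for the genuinely supercritical exponents $q>2$ (which occur for $0<\alpha<\tfrac32$ in (i) and for large $q$ in (ii)), and the degenerate value $\alpha=\tfrac32$, where the $q=2$ identity carries no information and no decay rate for $H$ is available to push below $L^2$. This term has no definite sign, and integrating by parts merely reproduces it term-for-term—the computation closes on itself—so it cannot be removed within the energy identity alone. Here I expect to use the full strength of the decay hypothesis by splitting at radius $R$: the exterior piece is bounded by $\big(\sup_{|y|>R,\,s}|\nabla J|\big)\|H\|_{L^q}^q$ and is absorbed into the positive left-hand side once $R$ is large, while the compact interior piece, where $\nabla J$ is merely bounded, must be eliminated by a second, independent relation. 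The natural candidate is a Rellich–Pohozaev identity obtained by testing the equation against $(y\cdot\nabla)H$, whose scaling coefficients differ from those above and should allow the $\nabla J$ contribution to be cancelled against $c_0\|H\|_{L^q}^q$. Making this cancellation rigorous, and justifying every boundary term from the decay of $J$, $\nabla J$ and the now-established decay of $H$, is the step I anticipate to be by far the most delicate.
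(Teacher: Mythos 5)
Your proposal correctly derives the starting identity (the paper's is the same one, up to boundary terms) and correctly identifies the obstruction, but it does not overcome it: by your own account, the bulk term $\int\!\!\int |H|^{q-2}(H\otimes H){:}\nabla J$ is only handled when you can reduce to $q=2$ (via an elliptic bootstrap that is itself only sketched, and whose uniformity in $s$ is unclear since $H\in L^q(\mathbb R^3\times[0,S_0])$ controls local norms of $H(\cdot,s)$ only in a time-integrated sense), and this reduction fails exactly in the cases the theorem includes: a given exponent $q>2$ (allowed for $0<\alpha<\frac32$ in (i) and for all of (ii)), since $L^q\cap L^\infty$ does not embed into $L^2$ when $q>2$, and the value $\alpha=\frac32$, where the $q=2$ identity is vacuous. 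The Rellich--Pohozaev relation you invoke to kill the interior contribution is not carried out, and there is no evidence it closes; so the proof is genuinely incomplete on a full-measure portion of the hypotheses.

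The idea you are missing is that one should never integrate over the whole space in the first place: the paper derives the identity on the exterior domain $\{R_1<|y|<R_2\}\times(0,S_0)$, keeping the flux terms through both spheres. Sending $R_2\to\infty$ along a sequence (using $H\in L^q$ and $|J|\to0$) leaves, besides the bulk terms on $\{|y|>R_1\}$, an inner boundary term $-\frac{b}{q}R_1\int_0^{S_0}\!\int_{|y|=R_1}|H|^q$ whose sign agrees with that of $(a-\frac{3b}{q})\int_0^{S_0}\!\int_{|y|>R_1}|H|^q$ precisely under hypotheses (i)/(ii). The decay \eqref{decay} then makes the two error terms absorbable by their signed counterparts for $R_1$ large: $|\nabla J|\le\frac12|a-\frac{3b}{q}|$ controls the exterior bulk term and $|J|\le\frac12\frac{|b|R_1}{q}$ controls the inner flux of $J$. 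This forces both integrals to vanish, i.e.\ $H\equiv0$ outside $B_{R_1}$, so $H$ is compactly supported; in particular $H\in L^p$ for every $0<p<q$ and $J=o(|y|)$ trivially. The interior region, where $\nabla J$ is merely bounded and your argument gets stuck, is then handled by invoking Theorem \ref{thm-1}, whose coefficient $|a-\frac{3b}{q}|\to\infty$ as $q\to0$ dominates any fixed bound on $|\nabla J|$ over the (now compact) support. So the decay hypothesis is used only at infinity, and no Pohozaev-type second identity is needed.
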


\begin{Theorem}\label{thm-4}
Let $H\in C_y^2C_s^1(\mathbb R^{3+1})$ be a time periodic solution of \eqref{eq-H} with period $S_0>0$ and $\alpha<-2$ satisfying 
\begin{equation}\label{decay4}
H(y,s)=o(1), \ \quad \nabla J(y,s)=o(1) \ \ \mbox{as} \ \ |y|\to \infty \ \ \forall \ \ s\in[0,S_0]. 
\end{equation}
 Then $H\equiv 0$ on $\mathbb R^{3+1}$.
\end{Theorem}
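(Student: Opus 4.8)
The plan is to recast \eqref{eq-H} as a transport (vorticity-type) equation for $H$ and to run the resulting Cauchy formula \emph{backward} in $s$ along characteristics that escape to spatial infinity, where the hypotheses \eqref{decay4} force the field to vanish. Using $\nabla\cdot H=\nabla\cdot J=0$ and the identity $\nabla\times(J\times H)=(H\cdot\nabla)J-(J\cdot\nabla)H$, the profile equation becomes
\[
\partial_s H+\big(\tfrac{1}{\alpha+2}\,y-J\big)\cdot\nabla H=-\tfrac{\alpha}{\alpha+2}H-(H\cdot\nabla)J .
\]
Writing $\tilde U=\tfrac{1}{\alpha+2}y-J$ for the drift and $\beta=\tfrac{\alpha}{\alpha+2}$, the decisive point is that $\alpha<-2$ makes both $\tfrac{1}{\alpha+2}<0$ and $\beta>0$.

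First I would record the consequences of \eqref{decay4}. Since $H\in C^2_yC^1_s$ is $S_0$-periodic with $H=o(1)$ and $\nabla J=o(1)$ as $|y|\to\infty$ for each $s$, a compactness argument over the period $[0,S_0]$ upgrades these to uniform-in-$s$ decay; in particular $H$ is globally bounded and, via the elliptic relation $\Delta H=-\nabla\times J$ together with an interpolation (Landau--Kolmogorov) estimate on large balls, $J=\nabla\times H=o(1)$ as well. Thus $M(y):=\sup_s|H(y,s)|$, $N(y):=\sup_s|\nabla J(y,s)|$ and $\sup_s|J(y,s)|$ all tend to $0$ as $|y|\to\infty$, with $H$ bounded by some $M_0$.

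Next, fix an arbitrary point $(x,s_0)$ and solve the characteristic ODE $\dot\xi(s)=\tilde U(\xi(s),s)$, $\xi(s_0)=x$, backward in $s$. Because $\tfrac{1}{\alpha+2}<0$, the identity $\tfrac{d}{ds}|\xi|^2=\tfrac{2}{\alpha+2}|\xi|^2-2\xi\cdot J$ shows that backward in time the linear drift is expanding; since $J$ is bounded, the trajectory exists for all $s\le s_0$ and $|\xi(s)|\to\infty$ as $s\to-\infty$. Along this curve set $g(s)=|H(\xi(s),s)|$; from the transport equation one gets $\tfrac{d}{ds}g\le(-\beta+|\nabla J(\xi(s),s)|)g$, whence
\[
g(s_0)\le g(s)\,e^{-\beta(s_0-s)}\exp\!\Big(\int_s^{s_0}N(\xi(\tau))\,d\tau\Big).
\]

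The main obstacle, and the heart of the argument, is to show the right-hand side tends to $0$ as $s\to-\infty$, i.e.\ that the exponential damping $e^{-\beta(s_0-s)}$ strictly dominates the stretching factor coming from $(H\cdot\nabla)J$. This is exactly where $\alpha<-2$ (giving $\beta>0$) and $\nabla J=o(1)$ are both indispensable: since $|\xi(\tau)|\to\infty$, we have $N(\xi(\tau))\to0$, so for every $\eta>0$ there is $C_\eta$ with $\int_s^{s_0}N(\xi(\tau))\,d\tau\le \eta(s_0-s)+C_\eta$; choosing $\eta<\beta$ makes the product $e^{-\beta(s_0-s)}\exp(\int_s^{s_0}N)$ decay like $e^{-(\beta-\eta)(s_0-s)}$. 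As $g(s)\le M_0$ is bounded, this forces $g(s_0)=0$, i.e.\ $H(x,s_0)=0$; since $(x,s_0)$ is arbitrary, $H\equiv0$. I expect the only genuinely delicate points to be the uniform-decay upgrade and the rigorous global-in-$s$ control of the backward characteristic (existence and escape to infinity), both of which should be routine once the hypotheses are put in uniform form.
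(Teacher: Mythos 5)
Your reduction of \eqref{eq-H} to a damped transport equation is sound, and the damping-versus-stretching estimate along an \emph{escaping} characteristic is correct: since $\alpha<-2$ gives $\beta=\frac{\alpha}{\alpha+2}>0$, any backward trajectory that tends to spatial infinity does force $H=0$ at its starting point. The genuine gap is the claim that \emph{every} backward characteristic escapes: ``since $J$ is bounded, the trajectory exists for all $s\le s_0$ and $|\xi(s)|\to\infty$ as $s\to-\infty$.'' Boundedness of $J$ only makes the radial derivative $\frac{d}{ds}|\xi|^2=\frac{2}{\alpha+2}|\xi|^2-2\xi\cdot J$ sign-definite (backward in time) once $|\xi|>\|J\|_{L^\infty}/|b|$, where $b=\frac{1}{\alpha+2}$; for starting points inside that ball, the drift $by-J(y,s)$ can perfectly well have equilibria or bounded invariant sets (nothing in the hypotheses prevents $J(y_*,s)=by_*$ at some $y_*$), and such trapped trajectories never reach the region where $|\nabla J|<\beta$, so your Gronwall estimate gives no information there. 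What your argument actually proves is that $H(\cdot,s)$ vanishes outside a fixed large ball, i.e. that $H$ is compactly supported in space, uniformly in $s$ --- not that $H\equiv 0$.

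That intermediate statement is exactly what the paper establishes first (by a different method: an exterior maximum principle for $|H|^2\varphi_R$ with a cut-off vanishing inside $B_R$, using periodicity to rule out a maximum on the initial slice), and the paper then \emph{finishes} by feeding compact support into Theorem \ref{thm-1}: compact support gives $H\in \cap_{0<q<r} L^q(\mathbb R^3\times [0,S_0])$, and \eqref{decay4} gives $J(y)=o(|y|)$, so the $q\to 0$ Liouville argument of Section \ref{sec-thm1} kills $H$ everywhere, including inside the ball. Your proof needs this (or some other) final step for the interior; as written it stops one step short. Two smaller points: (a) the ``compactness upgrade'' of \eqref{decay4} to decay uniform in $s$ is not automatic --- a continuous $S_0$-periodic function can decay in $|y|$ for each fixed $s$ without uniform decay (consider profiles of the form $\chi(\mathrm{dist}(s,S_0\mathbb Z)\,|y|)$ with $\chi$ a bump function) --- so you should either adopt the uniform reading of \eqref{decay4} as the hypothesis (as the paper implicitly does) or supply an actual argument; (b) the elliptic/interpolation detour to bound $J$ is unnecessary: integrating $\nabla J$ along rays, as the paper does, gives $J(y)=o(|y|)$ directly from $\nabla J=o(1)$, which is all the escape estimate requires.
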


\begin{Theorem}\label{thm-5}
Let $H\in C_y^3C_s^1(\mathbb R^{3+1})$ be a time periodic solution of \eqref{eq-H} with period $S_0>0$ and $\alpha>0$. Assume $y=0$ is a local extremal point of $J_i(\cdot, s)$ for all $i=1,2,3$ and $J(0,s)=0$ for all $s\in(0,S_0)$.
Then $H\equiv 0$ on $\mathbb R^{3+1}$.
\end{Theorem}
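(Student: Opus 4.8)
The plan is to exploit the fact that \eqref{eq-H} is, after the divergence-free identity $\nabla\times((\nabla\times H)\times H)=(H\cdot\nabla)J-(J\cdot\nabla)H$, a transport equation for $H$ that is \emph{linear along characteristics}. Writing the self-similar drift as $v(y,s)=\frac{1}{\alpha+2}y-J(y,s)$, equation \eqref{eq-H} reads
\[\partial_s H+(v\cdot\nabla)H=-\frac{\alpha}{\alpha+2}H-(H\cdot\nabla)J.\]
Along a characteristic $\dot\gamma=v(\gamma,s)$ the function $h(s):=H(\gamma(s),s)$ then solves the linear ODE $\dot h=A(s)h$ with $A(s)=-\frac{\alpha}{\alpha+2}I-\nabla J(\gamma(s),s)$, so if $h$ vanishes at one time it vanishes along the entire characteristic. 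The hypotheses activate this at $y=0$: since $0$ is a local extremum of each $J_i(\cdot,s)$ we have $\nabla J(0,s)=0$, and by assumption $J(0,s)=0$; hence $v(0,s)=0$, the origin is a stationary characteristic, and there $\dot h=-\frac{\alpha}{\alpha+2}h$. Periodicity in $s$ together with $\alpha>0$ (so $\frac{\alpha}{\alpha+2}>0$) forces $H(0,s)=0$ for all $s$.

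Next I would upgrade this to vanishing on a whole ball. Because $J(0,s)=0$ and $\nabla J(0,s)=0$, one has $v(y,s)=\frac{1}{\alpha+2}y+o(|y|)$ uniformly in $s$, so the origin is radially repelling: for $|y|<\delta$ small the radial speed $\frac{d}{ds}|\gamma|=\frac{|\gamma|}{\alpha+2}-\frac{\gamma\cdot J}{|\gamma|}$ is positive, whence every backward characteristic issuing from $B_\delta$ stays in $B_\delta$ and converges to $0$. Fixing $y_0\in B_\delta$ and $s_0$, along the backward characteristic $\gamma(s)\to 0$, so $h(s)\to H(0,\cdot)=0$ as $s\to-\infty$ by uniform continuity of the periodic field $H$ near $0$. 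On the other hand $A(s)\to-\frac{\alpha}{\alpha+2}I$, so for $s<s_1$ one gets $\frac{d}{ds}|h|^2\le-\frac{\alpha}{\alpha+2}|h|^2$; a nonzero solution of this inequality would satisfy $|h(s)|^2\ge|h(s_1)|^2 e^{\frac{\alpha}{\alpha+2}(s_1-s)}\to\infty$ as $s\to-\infty$, contradicting $h(s)\to 0$. Hence $h\equiv 0$ and $H(y_0,s_0)=0$; as $y_0,s_0$ are arbitrary, $H\equiv 0$ on $B_\delta\times\mathbb{R}$.

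Finally I would globalize by a continuation in the radius. Set $R=\sup\{r>0:\ H\equiv 0 \text{ on } B_r\times\mathbb{R}\}\ge\delta$. If $R<\infty$, then by continuity $H\equiv 0$ on $\overline{B_R}\times\mathbb{R}$, so all derivatives of $H$ up to order three vanish on $\overline{B_R}$ (this is where the $C^3$ regularity is used), giving $J=0$ and $\nabla J=0$ on $\partial B_R$ and, by Taylor expansion, $J(y,s)=O(\mathrm{dist}(y,\partial B_R)^2)$ just outside. Thus on and just beyond $\partial B_R$ the drift is $v\approx\frac{1}{\alpha+2}y$, radially outward, and the radial speed remains positive in a thin annulus $R<|y|<R+\varepsilon$. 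Consequently the backward characteristic from any $(y_0,s_0)$ with $R<|y_0|<R+\varepsilon$ has strictly decreasing radius and crosses $\partial B_R$ in finite backward time, reaching a point where $H=0$; the linear ODE then yields $h\equiv 0$, i.e. $H(y_0,s_0)=0$. Hence $H\equiv 0$ on $B_{R+\varepsilon}\times\mathbb{R}$, contradicting the definition of $R$. Therefore $R=\infty$ and $H\equiv 0$ on $\mathbb{R}^{3+1}$.

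I expect the globalization to be the main obstacle: before it is carried out nothing is known about $J$ away from the origin, so one cannot assume that backward characteristics return to the region where $H$ is already known to vanish. The resolution is that the scaling term $\frac{1}{\alpha+2}y$ produces an outward radial drift that dominates the (vanishing) Hall velocity $J$ on $\partial B_R$. This is precisely where $\alpha>0$ is essential --- it makes $\frac{\alpha}{\alpha+2}>0$ in the backward growth estimate and $\frac{1}{\alpha+2}>0$ in the radial drift --- and where the $C^3$ hypothesis enters, through the quadratic control of $J$ outside $\partial B_R$ needed to keep the radial speed positive.
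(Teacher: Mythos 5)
Your proof is correct, and it takes a genuinely different route from the paper's. Both arguments have the same two-stage skeleton (kill $H$ on a small ball around the origin, then propagate outward), and both consume exactly the same structural inputs: $\nabla J(0,s)=0$ by Fermat's theorem at the extremal point together with $J(0,s)=0$; the positivity of $a=\frac{\alpha}{\alpha+2}$ and $b=\frac{1}{\alpha+2}$ from $\alpha>0$; quadratic (Taylor) control of $J$ near sets where $H$ vanishes, which is where $C^3_y$ enters; and time periodicity. But the methods differ. The paper is Eulerian: from $|\nabla J|\le r_0M$ and $|J|\le |y|r_0M$ on $B_{r_0}$ (with $M=\sup_s\|D^2J\|_{L^\infty(B_R)}$) it derives the pointwise inequality $\partial_s\Phi+a\Phi+b(y\cdot\nabla)\Phi-(J\cdot\nabla)\Phi\le 0$ for $\Phi=|H|^2\varphi_{r_0}$, multiplies by $|\Phi|^{q-2}\Phi$ and integrates over $B_{r_0}\times[0,S_0]$; periodicity kills the $\partial_s$ term, the divergence theorem converts the drift terms (using $\div J=0$) into $-\frac{3b}{q}\int|\Phi|^q$ and $0$, and choosing $q>\frac{3b}{a}$ forces $\Phi\equiv 0$; the outward propagation is an induction with the \emph{uniform} step $m=\frac{1}{2M}\min\{a,b\}$, reaching any fixed $R$ in finitely many steps. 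You instead work Lagrangianly: freezing $J$ as a coefficient field makes \eqref{eq-H2} a linear transport equation, so $h(s)=H(\gamma(s),s)$ obeys a linear ODE along characteristics of $v=by-J$; the stationary characteristic at the origin plus periodicity gives $H(0,\cdot)=0$, the radially repelling drift ($J=O(|y|^2)$) makes backward characteristics converge to the origin, your Gr\"onwall-type growth estimate then kills $H$ on a small ball, and a sup-of-radii continuation (backward characteristics from a thin annulus reach $\partial B_R$ in finite time because the outward drift $b|y|$ dominates the quadratically small $J$, after which linear ODE uniqueness propagates the vanishing) globalizes without any uniform step size. What each buys: your argument is pointwise and geometric, and makes transparent \emph{why} $\alpha>0$ matters (damping at the stationary characteristic, outward drift); the paper's energy method avoids all discussion of the characteristic flow (local Lipschitz regularity of $v$, backward global existence, asymptotics as $s\to-\infty$) and gives quantitative radii increments. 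If you write yours up, do state explicitly that the $o(|y|)$ and $O(\mathrm{dist}^2)$ bounds on $J$ are uniform in $s$ (by periodicity and compactness of $[0,S_0]$) and that characteristics are unique because $v$ is locally Lipschitz in $y$, since both facts are silently used.
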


Next we show that, for $\alpha>\frac32$, a non-decay condition on $H$ can also exclude the existence of discretely self-similar solution.

\begin{Theorem}\label{thm-6}
For $\alpha>\frac32$, let $H\in C^\infty(\mathbb R^3\times \mathbb R)$ be a time periodic solution to \eqref{eq-H} with period $S_0>0$. If 
\begin{equation}\label{non-decay}
\sup_{s\in \mathbb R} |H(y,s)|=o(|y|^2) \ \ \ \mbox{as} \ \ |y|\to \infty,
\end{equation}
we have $H\equiv 0$ on $\mathbb R^{3+1}$.
\end{Theorem}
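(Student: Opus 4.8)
The proof I have in mind is a localized energy identity driven by one crucial cancellation. Write $J=\nabla\times H$ and recall the identity $\nabla\times(J\times H)=(H\cdot\nabla)J-(J\cdot\nabla)H$, valid because $\nabla\cdot H=\nabla\cdot J=0$. The decisive point is that the Hall nonlinearity is energy--neutral when tested against $H$: integrating by parts, $\int(\nabla\times(J\times H))\cdot H=\int(J\times H)\cdot J=0$ since $J\times H\perp J$ pointwise. So the plan is to multiply the first equation of \eqref{eq-H} by $H$ and integrate over $B_R\times[0,S_0]$. The $S_0$--periodicity kills $\int_0^{S_0}\partial_s|H|^2\,ds=0$, the scaling term $\frac{1}{\alpha+2}(y\cdot\nabla)H$ integrates by parts, and the bulk nonlinearity vanishes, leaving
\[
\frac{2\alpha-3}{2(\alpha+2)}\,I(R)+\frac{R}{2(\alpha+2)}\,I'(R)=-\int_0^{S_0}\!\!\oint_{\partial B_R}\big((J\times H)\times H\big)\cdot n\,dS\,ds,
\]
where $I(R)=\int_0^{S_0}\!\int_{B_R}|H|^2$ and $I'(R)=\int_0^{S_0}\!\oint_{\partial B_R}|H|^2$. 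For $\alpha>\tfrac32$ the coefficient $c:=\frac{2\alpha-3}{2(\alpha+2)}$ is strictly positive and both left--hand terms are nonnegative; this is exactly the role of the hypothesis $\alpha>\tfrac32$.

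The surface integral is the only error. Expanding $(J\times H)\times H=-|H|^2J+(H\cdot J)H$ yields the pointwise bound $\big|((J\times H)\times H)\cdot n\big|\le 2|H|^2|J|$, hence the right--hand side is at most $2\sup_{\partial B_R}|J|\cdot I'(R)$. Granting the far--field gradient estimate $\sup_{s}\sup_{|y|=R}|J(y,s)|=o(R)$ (discussed below), and discarding the favorable nonnegative term $\frac{R}{2(\alpha+2)}I'(R)$, I obtain the differential inequality $c\,I(R)\le o(R)\,I'(R)$, i.e. $(\log I)'(R)\ge c/(\varepsilon(R)R)$ with $\varepsilon(R)\to0$. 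Integrating forces $I$ to grow faster than every power $R^N$ once $I(R_0)>0$ for some $R_0$. On the other hand the non--decay hypothesis \eqref{non-decay}, $\sup_s|H|=o(|y|^2)$, gives the crude upper bound $I(R)\le S_0\int_{B_R}(\sup_s|H|)^2=o(R^7)$. The two are incompatible unless $I\equiv0$, whence $H\equiv0$ on $\mathbb{R}^{3+1}$.

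The main obstacle is precisely the gradient estimate $|J|=o(|y|)$ extracted from $|H|=o(|y|^2)$. It cannot be obtained by the usual translation--and--rescale interior regularity, because \eqref{eq-H} is not translation invariant in $y$ (the drift $\frac{1}{\alpha+2}(y\cdot\nabla)H$ breaks it) and there is no smoothing term, so the forcing $\nabla\times(J\times H)$ still carries second spatial derivatives of $H$; moreover $H$ is only assumed smooth, with no quantitative control of its derivatives. I would attack it by a scale--adapted interior argument: about a far point $y_0$ with $|y_0|=R$, normalize $H$ on a ball of radius comparable to $R$ on which $|H|=o(R^2)$, and propagate the amplitude bound to an $o(R)$ bound on $\nabla H$ using the smooth solution's local estimates; as a fallback one may integrate a pointwise identity for $|J|$ along the expanding characteristics $y(s)=y_0 e^{s/(\alpha+2)}$ and invoke periodicity. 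Either way the only output needed is the rate $\sup_{|y|=R}|J|=o(R)$, which is the exact borderline matching the threshold in \eqref{non-decay}: under the natural scaling $J\sim|H|/|y|$, the ratio of the cubic surface flux to the quadratic bulk density is $\tfrac1R\sup|J|\sim R^{\beta-2}$ for $|H|\sim|y|^\beta$, and this is $o(1)$ precisely when $\beta<2$.
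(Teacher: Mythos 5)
Your proposal takes a genuinely different route from the paper, and its energy core is sound, but it hinges on one step that is a genuine gap --- the very step you flag yourself.

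First the comparison. The paper never works with $I(R)=\int_0^{S_0}\int_{B_R}|H|^2$ in self-similar variables. Instead it proves a local energy estimate for the physical solution $B(x,t)$ on dyadic balls (Proposition \ref{prop-energy}), obtained via the Chae--Wolf iteration lemma (Lemma \ref{le-iterative}); it then converts \eqref{non-decay} into the weighted bounds $\|(1+|x|)^{-2}B(t)\|_{L^\infty}\le \epsilon(-t)^{-1}$ and $\|(1+|x|)^{-1}\nabla\times B(t)\|_{L^\infty}\le \epsilon(-t)^{-1}$, and finally uses the discrete self-similarity $B(\lambda^k x,t)=\lambda^{-k\alpha}B(x,\lambda^{-k(\alpha+2)}t)$ to get $\|B(t)\|_{L^2(B_{\lambda^k})}^2\le c\epsilon\,\lambda^{k(3-2\alpha+c\epsilon(\alpha+2))}(-t)^{-2-c\epsilon}\to 0$ as $k\to\infty$ once $\epsilon$ is small; that exponent is where $\alpha>\frac32$ enters. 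Your argument stays entirely in the $(y,s)$ variables: the ball-wise identity $\frac{2\alpha-3}{2(\alpha+2)}I(R)+\frac{R}{2(\alpha+2)}I'(R)=-\int_0^{S_0}\oint_{\partial B_R}\bigl((J\times H)\times H\bigr)\cdot n\,dS\,ds$, the pointwise bound $\bigl|\bigl((J\times H)\times H\bigr)\cdot n\bigr|\le 2|H|^2|J|$, and the resulting inequality $c\,I(R)\le o(R)\,I'(R)$ forcing super-polynomial growth of $I$, against $I(R)=o(R^7)$ from \eqref{non-decay}. I checked this part: the cancellation $\int (J\times H)\cdot J=0$, the sign of the drift boundary term, and the integration of the differential inequality are all correct, and here $\alpha>\frac32$ enters only through the positivity of $\frac{2\alpha-3}{2(\alpha+2)}$. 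This replaces both the iteration lemma and the rescaling step, so it is the more elementary mechanism.

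The gap: everything rests on $\sup_s\sup_{|y|=R}|J(y,s)|=o(R)$, and neither of your proposed fixes can deliver it. There is no interior regularity theory converting an amplitude bound into a gradient bound for \eqref{eq-H}: the system has no elliptic or parabolic smoothing, so a smooth divergence-free field of size $o(R^2)$ on an annulus of width comparable to $R$ can have arbitrarily large curl there --- smoothness of $H$ is qualitative and carries no quantitative derivative control. The characteristics idea fails as well: $J$ does not satisfy a transport equation along the dilation flow, since taking the curl of \eqref{eq-H} places two additional derivatives of $H$ inside the Hall term, so integrating along $y(s)=y_0e^{s/(\alpha+2)}$ and invoking periodicity controls nothing. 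You should know, however, that the paper's own proof makes exactly the same leap: it asserts that \eqref{non-decay} implies $|\nabla\times H(y,s)|\le\epsilon|y|$ for large $|y|$, with no justification, and this implication does not hold for a general smooth field. So in effect both your argument and the paper's use the strengthened hypothesis $\sup_s|H|=o(|y|^2)$ \emph{together with} $\sup_s|\nabla\times H|=o(|y|)$; granting that extra input, your proof closes and is a genuinely different, self-contained alternative to the paper's scaling argument.
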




\medskip

\subsection{Notations}
We denote $C$ or $c$ by a generic constant which may be different from line to line. The symbol $\lesssim$ is used as an inequality up to a non-significant constant.

We denote $B_r(x_0)=\{y\in\mathbb R^3: |y-x_0|<r\}$ the ball centered at $x_0$ with radius $r>0$. If $x_0$ is the origin, we simply denote $B_r=B_r(0)$.

In the rest of the paper, each section is devoted to the proof of a theorem stated above.

\bigskip

\section{Proof of Theorem \ref{thm-1}}
\label{sec-thm1}

Define the smooth radial cut-off function $\varphi\in C^\infty_0(\mathbb R)$ as
\begin{equation}\notag
\varphi(|x|)=
\begin{cases}
1, \ \ \ |x|<1\\
0, \ \ \ |x|>2
\end{cases}
\end{equation}
satisfying $0\leq \varphi(|x|)\leq 1$ for $1\leq |x|\leq 2$. We then define the rescaled cut-off function 
\[\varphi_R(y)=\varphi(|y|/R).\]

Denote $a=\frac{\alpha}{\alpha+2}$ and $b=\frac{1}{\alpha+2}$. Rewriting the Hall term 
\[\nabla\times((\nabla\times H)\times H)=(H\cdot\nabla) J-(J\cdot \nabla) H\]
 in the profile equation \eqref{eq-H}, we arrive at 
 \begin{equation}\label{eq-H2}
\begin{split}
\partial_s H+aH+b(y\cdot \nabla) H+(H\cdot\nabla) J-(J\cdot \nabla) H &= 0,\\
\nabla\cdot H &= 0.
\end{split}
\end{equation}
For $q>0$, taking inner product of the first equation of \eqref{eq-H2} with $|H|^{q-2}H\varphi_R$, integrating over $\mathbb R^3\times (0,S_0)$ and applying integration by parts yields
\begin{equation}\notag
\begin{split}
&\int_0^{S_0}\int_{\mathbb R^3}\frac1q\partial_s |H|^q\varphi_R dyds+(a-\frac{3b}q) \int_0^{S_0}\int_{\mathbb R^3} |H|^q\varphi_R dyds\\
&-\frac{b}q\int_0^{S_0}\int_{\mathbb R^3} (y\cdot\nabla \varphi_R)|H|^q dyds+\frac1q\int_0^{S_0}\int_{\mathbb R^3} (J\cdot\nabla) \varphi_R|H|^q dyds\\
&+\int_0^{S_0}\int_{\mathbb R^3} (\frac{H}{|H|}\cdot\nabla )J\cdot \frac{H}{|H|} |H|^q  \varphi_Rdyds=0.
\end{split}
\end{equation}
Applying the time periodicity property of $H$, the first integral in the equation above vanishes. Hence we end up with the energy identity 
\begin{equation}\label{energy-1}
\begin{split}
&(a-\frac{3b}q) \int_0^{S_0}\int_{\mathbb R^3} |H|^q\varphi_R dyds
+\int_0^{S_0}\int_{\mathbb R^3} (\frac{H}{|H|}\cdot\nabla )J\cdot \frac{H}{|H|} |H|^q  \varphi_Rdyds\\
=&\frac{1}q\int_0^{S_0}\int_{\mathbb R^3} (by\cdot\nabla \varphi_R)|H|^q dyds-\frac{1}q\int_0^{S_0}\int_{\mathbb R^3} (J\cdot\nabla \varphi_R)|H|^q dyds.
\end{split}
\end{equation}
Note 
\begin{equation}\notag
\nabla \varphi_R
\begin{cases}
=0, \ \ \ |y|<R \ \ \mbox{or} \ \ |y|>2R\\
\leq \frac{\|\varphi'\|_{L^\infty}}{R}, \ \ \ R\leq |y|\leq 2R.
\end{cases}
\end{equation}
Thus the right hand side of \eqref{energy-1} has the bound
\[
\begin{split}
&\left| \frac{1}q\int_0^{S_0}\int_{\mathbb R^3} (by\cdot\nabla \varphi_R)|H|^q dyds-\frac{1}q\int_0^{S_0}\int_{\mathbb R^3} (J\cdot\nabla \varphi_R)|H|^q dyds\right|\\
\leq &\ \frac{\|\varphi'\|_{L^\infty}}{q} \int_0^{S_0}\int_{R\leq |y|\leq 2R} |H|^q\frac{|y|+|J|}{R} dyds\\
\leq &\ C  \int_0^{S_0}\int_{R\leq |y|\leq 2R} |H|^q dyds
\end{split}
\]
where in the last step we used the assumption (ii) of the theorem. 
On the other hand, since $|(\frac{H}{|H|}\cdot\nabla )J\cdot \frac{H}{|H|}|\leq |\nabla J|$, we have 
\[
\begin{split}
&\left| \int_0^{S_0}\int_{\mathbb R^3} (\frac{H}{|H|}\cdot\nabla )J\cdot \frac{H}{|H|} |H|^q  \varphi_Rdyds \right|\\
\leq &  \int_0^{S_0}\int_{\mathbb R^3} |\nabla J| |H|^q  \varphi_Rdyds \\
\leq &\ C  \int_0^{S_0}\int_{R\leq |y|\leq 2R} |H|^q dyds.
\end{split}
\]
Combining the previous two estimates with \eqref{energy-1} gives
\begin{equation}\notag
\begin{split}
|a-\frac{3b}q| \int_0^{S_0}\int_{\mathbb R^3} |H|^q\varphi_R dyds\leq C  \int_0^{S_0}\int_{R\leq |y|\leq 2R} |H|^q dyds.
\end{split}
\end{equation}
Taking $R\to\infty$, we have 
\[ \int_0^{S_0}\int_{R\leq |y|\leq 2R} |H|^q dyds \to 0\]
thanks to the assumption (iii). While $|a-\frac{3b}q| \to \infty$ as $q\to 0$. Hence we obtain 
\[\int_0^{S_0}\int_{\mathbb R^3} |H|^q dyds=0\]
which implies $H\equiv 0$ on $\mathbb R^{3+1}$.

\bigskip

\section{Proof of Theorem \ref{thm-2}}
\label{sec-thm2}

Let $\varphi_R$ be the radial cut-off function defined in Section \ref{sec-thm1}, with the additional property that it is non-increasing on $[0,\infty)$. 
We take inner product of the first equation of \eqref{eq-H} with $H\varphi_R$ and integrate over $\mathbb R^3\times [0,S_0]$ to obtain 
\begin{equation}\notag
\begin{split}
&\int_0^{S_0}\int_{\mathbb R^3}\frac12\partial_s |H|^2\varphi_R dyds+a \int_0^{S_0}\int_{\mathbb R^3} |H|^2\varphi_R dyds
+b\int_0^{S_0}\int_{\mathbb R^3} (y\cdot\nabla) H\cdot H\varphi_R dyds\\
&+\int_0^{S_0}\int_{\mathbb R^3} \nabla\times((\nabla\times H)\times H)\cdot (H\varphi_R) dyds=0.
\end{split}
\end{equation}
Again the first integral vanishes due the time periodicity in $s$. Applying integration by parts, the third integral can be written as
\begin{equation}\notag
\begin{split}
&\quad b\int_0^{S_0}\int_{\mathbb R^3} (y\cdot\nabla) H\cdot H\varphi_R dyds\\
&=\frac{b}2\int_0^{S_0}\int_{\mathbb R^3} (y\cdot\nabla) |H|^2\varphi_R dyds\\
&=\frac{b}2\int_0^{S_0}\int_{\mathbb R^3} \div (y |H|^2\varphi_R) dyds-\frac{b}2\int_0^{S_0}\int_{\mathbb R^3} \div (y) |H|^2\varphi_R dyds\\
&\quad- \frac{b}2\int_0^{S_0}\int_{\mathbb R^3} (y\cdot\nabla \varphi_R) |H|^2 dyds\\
&=-\frac{3b}2\int_0^{S_0}\int_{\mathbb R^3}  |H|^2\varphi_R dyds- \frac{b}2\int_0^{S_0}\int_{\mathbb R^3} (y\cdot\nabla \varphi_R) |H|^2 dyds.
\end{split}
\end{equation}
While the last integral becomes after integration by parts
\begin{equation}\notag
\begin{split}
&\quad \int_0^{S_0}\int_{\mathbb R^3} \nabla\times((\nabla\times H)\times H)\cdot (H\varphi_R) dyds\\
&=\int_0^{S_0}\int_{\mathbb R^3} ((\nabla\times H)\times H)\cdot \nabla\times (H\varphi_R) dyds\\
&=\int_0^{S_0}\int_{\mathbb R^3} ((\nabla\times H)\times H)\cdot (\nabla\times H)\varphi_R dyds\\
&\quad -\int_0^{S_0}\int_{\mathbb R^3} ((\nabla\times H)\times H)\cdot (H\times \nabla \varphi_R) dyds\\
&=-\int_0^{S_0}\int_{\mathbb R^3} ((\nabla\times H)\times H)\cdot (H\times \nabla \varphi_R) dyds.
\end{split}
\end{equation}
Combining the analysis above gives the energy identity
\begin{equation}\label{energy-2}
\begin{split}
&\quad (a-\frac{3b}2) \int_0^{S_0}\int_{\mathbb R^3} |H|^2\varphi_R dyds- \frac{b}2\int_0^{S_0}\int_{\mathbb R^3} (y\cdot\nabla \varphi_R) |H|^2 dyds\\
&=\int_0^{S_0}\int_{\mathbb R^3} ((\nabla\times H)\times H)\cdot (H\times \nabla \varphi_R) dyds.
\end{split}
\end{equation}
Recall $a=\frac{\alpha}{\alpha+2}$ and $b=\frac{1}{\alpha+2}$. For $\alpha>-2$, b>0. Note that
\[y\cdot \nabla\varphi_R=\sum_{i=1}^3y_i\partial_i \varphi_R=\sum_{i=1}^3\frac{y_i^2}{R|y|}\varphi'(\frac{|y|}R)=\frac{|y|}{R}\varphi'(\frac{|y|}{R})\leq 0\]
since $\varphi$ is non-increasing. Hence it follows from \eqref{energy-2} that
\begin{equation}\label{energy-3}
(a-\frac{3b}2) \int_0^{S_0}\int_{\mathbb R^3} |H|^2\varphi_R dyds
\leq \int_0^{S_0}\int_{\mathbb R^3} ((\nabla\times H)\times H)\cdot (H\times \nabla \varphi_R) dyds.
\end{equation}

{\textbf{ Case(i).}}
Since $a-\frac{3b}2=\frac{2\alpha-3}{2(\alpha+2)}$, if $\alpha>\frac32$, we have $a-\frac{3b}2>0$. Thus \eqref{energy-3} implies
\begin{equation}\label{energy-4}
\begin{split}
&\quad (a-\frac{3b}2) \int_0^{S_0}\int_{\mathbb R^3} |H|^2\varphi_{R_1} dyds\\
&\leq (a-\frac{3b}2) \int_0^{S_0}\int_{\mathbb R^3} |H|^2\varphi_{R_2} dyds\\
&\leq \int_0^{S_0}\int_{\mathbb R^3} ((\nabla\times H)\times H)\cdot (H\times \nabla \varphi_{R_2}) dyds
\end{split}
\end{equation}
for any $0<R_1<R_2<\infty$. 

For $1<p, p_1, p_2<\infty$ satisfying $\frac1p+\frac1{p_1}+\frac1{p_2}=1$, applying H\"older's inequality to the last integral in \eqref{energy-4} gives
\begin{equation}\notag
\begin{split}
&\quad\int_{\mathbb R^3} ((\nabla\times H)\times H)\cdot (H\times \nabla \varphi_{R_2}) dyds\\
&\leq \frac{\|\varphi'\|_{L^\infty}}{R_2} \int_{R_2<|y|<2R_2} |\nabla H||H|^2\, dyds\\
&\lesssim \frac{\|\varphi'\|_{L^\infty}}{R_2} \|\nabla H\|_{L^p(B_{2R_2}\backslash B_{R_2})}\|H\|_{L^{2p_1}(B_{2R_2}\backslash B_{R_2})}^2 R_2^{\frac3{p_2}}\\
&\lesssim \|\varphi'\|_{L^\infty} \|\nabla H\|_{L^p(B_{2R_2}\backslash B_{R_2})}^3 R_2^{\frac3{p_2}-1}\\
\end{split}
\end{equation}
where we used Sobolev imbedding provided $\frac{6-2p}{3p}\leq\frac{1}{p_1}<1$, which is satisfied for $p>\frac65$. Putting this estimate together with \eqref{energy-4} leads to 
\begin{equation}\label{energy-5}
\begin{split}
(a-\frac{3b}2) \int_0^{S_0}\int_{\mathbb R^3} |H|^2\varphi_{R_1} dyds
\lesssim \|\varphi'\|_{L^\infty}  \int_0^{S_0}\|\nabla H\|_{L^p(B_{2R_2}\backslash B_{R_2})}^3 \, dsR_2^{\frac3{p_2}-1}.
\end{split}
\end{equation}
In view of the assumption (i) of Theorem \ref{thm-2}, if we take $p_2>1$ such that $\frac3{p_2}-1\leq 0$, we have
\[\lim_{R_2\to \infty} \int_0^{S_0}\|\nabla H\|_{L^p(B_{2R_2}\textbackslash B_{R_2})}^3 \, dsR_2^{\frac3{p_2}-1}=0 .\]
We remark that, for any $p>\frac65$, it is possible to find $p_2, p_3>1$ satisfying $\frac1p+\frac1{p_1}+\frac1{p_2}=1$ and $\frac3{p_2}-1\leq 0$. 
Since $a-\frac{3b}2>0$, it follows from \eqref{energy-5} that
\[ \int_0^{S_0}\int_{\mathbb R^3} |H|^2\varphi_{R_1} dyds=0, \ \ \ \forall \ \ R_1>0.\]
Therefore we have $H\equiv 0$ on $\mathbb R^{3+1}$.

{\textbf{ Case(ii).}} If $-2<\alpha<\frac32$, it follows from \eqref{energy-2} that
\begin{equation}\notag
\begin{split}
&\quad |a-\frac{3b}2| \int_0^{S_0}\int_{\mathbb R^3} |H|^2\varphi_R dyds\\
&\leq \frac{b}2\int_0^{S_0}\int_{\mathbb R^3} |y\cdot\nabla \varphi_R| |H|^2 dyds
+\int_0^{S_0}\int_{\mathbb R^3} |(\nabla\times H)\times H| |H\times \nabla \varphi_R| dyds.
\end{split}
\end{equation}
Again, as in Case (i), for $H\in L^3_s W_y^{1,p}$ with $p>\frac65$, we can show that 
\[\lim_{R\to \infty} \int_0^{S_0}\int_{\mathbb R^3} |(\nabla\times H)\times H| |H\times \nabla \varphi_R| dyds =0.\]
On the other hand, we deduce 
\begin{equation}\notag
\begin{split}
\int_0^{S_0}\int_{\mathbb R^3} |y\cdot\nabla \varphi_R| |H|^2 dyds&\leq \int_0^{S_0}\int_{R<|y|<2R} \frac{|y||\varphi'|}R |H|^2 dyds\\
&\lesssim  \int_0^{S_0}\int_{R<|y|<2R}  |H|^2 dyds\\
&\to 0
\end{split}
\end{equation}
as $R\to\infty$, since $H\in L^2_s L^2_y$. Therefore, we again obtain 
\[\int_0^{S_0}\int_{\mathbb R^3} |H|^2 dyds=0\]
and hence $H\equiv 0$ on $\mathbb R^{3+1}$.

\bigskip

\section{Proof of Theorem \ref{thm-3}}
\label{sec-thm3}

Taking dot product of the first equation in \eqref{eq-H2} with $|H|^{q-2}H$ yields
\begin{equation}\notag
\begin{split}
&\frac1q \partial_s |H|^q+a|H|^q+b(y\cdot\nabla)H\cdot |H|^{q-2}H\\
&+(H\cdot\nabla) J\cdot |H|^{q-2}H-(J\cdot\nabla )H\cdot |H|^{q-2}H=0.
\end{split}
\end{equation}
Note we can write
\begin{equation}\notag
\begin{split}
(y\cdot\nabla)H\cdot |H|^{q-2}H&=(y\cdot\nabla)\frac{|H|^q}{q}
=\div (y\frac{|H|^q}{q})-(\div y) \frac{|H|^q}{q}=\div (y\frac{|H|^q}{q})-3 \frac{|H|^q}{q}
\end{split}
\end{equation}
and 
\[(J\cdot\nabla )H\cdot |H|^{q-2}H=(J\cdot\nabla) \frac{|H|^q}{q}=\div (J \frac{|H|^q}{q}) \]
since $\div J=0$. Combining the last three identities gives 
\begin{equation}\notag
\begin{split}
&\frac1q \partial_s |H|^q+(a-\frac{3b}q)|H|^q+\frac{b}q \div (y |H|^q)\\
&+(\frac{H}{|H|}\cdot\nabla) J\cdot \frac{H}{|H|} |H|^{q}-\frac1q \div (J |H|^q)=0.
\end{split}
\end{equation}
For any $0<R_1<R_2$, we have by integrating the previous equation over $(B_{R_2}\backslash B_{R_1})\times (0,S_0)$ 
\begin{equation}\label{energy-31}
\begin{split}
&\frac1q \int_0^{S_0}\int_{R_1<|y|<R_2} \partial_s |H|^q\, dyds+(a-\frac{3b}q)\int_0^{S_0}\int_{R_1<|y|<R_2} |H|^q\, dyds\\
&+\frac{b}q\int_0^{S_0}\int_{R_1<|y|<R_2}  \div (y |H|^q)\, dyds
+\int_0^{S_0}\int_{R_1<|y|<R_2} (\frac{H}{|H|}\cdot\nabla) J\cdot \frac{H}{|H|} |H|^{q}\, dyds\\
&-\frac1q\int_0^{S_0}\int_{R_1<|y|<R_2}  \div (J |H|^q)\, dyds=0.
\end{split}
\end{equation}
In light of the time periodicity of $H$ in $s$, it is clear that 
\[ \int_0^{S_0}\int_{R_1<|y|<R_2} \partial_s |H|^q\, dyds=\int_{R_1<|y|<R_2} |H(y,S_0)|^q\, dy-\int_{R_1<|y|<R_2} |H(y,0)|^q\, dy=0.\]
Applying divergence theorem, we obtain
\begin{equation}\notag
\begin{split}
&\quad \int_0^{S_0}\int_{R_1<|y|<R_2}  \div (y |H|^q)\, dyds\\
&=\int_0^{S_0}\int_{|y|=R_2} y\cdot \frac{y}{|y|} |H|^q\, dyds-\int_0^{S_0}\int_{|y|=R_1} y\cdot \frac{y}{|y|} |H|^q\, dyds\\
&=R_2 \int_0^{S_0}\int_{|y|=R_2}  |H|^q\, dyds-R_1 \int_0^{S_0}\int_{|y|=R_1}  |H|^q\, dyds
\end{split}
\end{equation}
and 
\begin{equation}\notag
\begin{split}
&\quad \int_0^{S_0}\int_{R_1<|y|<R_2}  \div (J |H|^q)\, dyds\\
&=\int_0^{S_0}\int_{|y|=R_2} J\cdot \frac{y}{|y|} |H|^q\, dyds-\int_0^{S_0}\int_{|y|=R_1} J\cdot \frac{y}{|y|} |H|^q\, dyds.\\
\end{split}
\end{equation}
Putting the last equations into \eqref{energy-31} yields
\begin{equation}\label{energy-32}
\begin{split}
&(a-\frac{3b}q)\int_0^{S_0}\int_{R_1<|y|<R_2} |H|^q\, dyds
+\int_0^{S_0}\int_{R_1<|y|<R_2} (\frac{H}{|H|}\cdot\nabla) J\cdot \frac{H}{|H|} |H|^{q}\, dyds\\
&+\frac{b}q R_2 \int_0^{S_0}\int_{|y|=R_2}  |H|^q\, dSds-\frac{b}q R_1\int_0^{S_0}\int_{|y|=R_1}  |H|^q\, dSds\\
&-\frac1{q} \int_0^{S_0}\int_{|y|=R_2} J\cdot \frac{y}{|y|} |H|^q\, dSds+\frac1{q} \int_0^{S_0}\int_{|y|=R_1} J\cdot \frac{y}{|y|} |H|^q\, dSds=0.
\end{split}
\end{equation}
Since $H\in L^q(\mathbb R^3\times [0,S_0])$, we have 
\[\int_0^{S_0}\int_{\mathbb R^3}|H|^q\, dyds=\int_0^{\infty}\int_0^{S_0} \int_{|y|=r}|H|^q\, dS_r ds dr<\infty, \]
which implies (at least for a sequence $\{R_{2,j}\}_{j=1}^\infty$)
\[\lim_{R_2\to\infty}R_2 \int_0^{S_0}\int_{|y|=R_2}  |H|^q\, dSds =0.\]
Additionally, it follows from assumption \eqref{decay} that
\[\lim_{R_2\to\infty}\int_0^{S_0}\int_{|y|=R_2} J\cdot \frac{y}{|y|} |H|^q\, dSds =0.\]
Hence, taking $R_2\to\infty$ in \eqref{energy-32} gives
\begin{equation}\label{energy-33}
\begin{split}
&\quad(a-\frac{3b}q)\int_0^{S_0}\int_{|y|>R_1} |H|^q\, dyds-\frac{b}q R_1\int_0^{S_0}\int_{|y|=R_1}  |H|^q\, dSds\\
&=-\int_0^{S_0}\int_{|y|>R_1} (\frac{H}{|H|}\cdot\nabla) J\cdot \frac{H}{|H|} |H|^{q}\, dyds-\frac1{q} \int_0^{S_0}\int_{|y|=R_1} J\cdot \frac{y}{|y|} |H|^q\, dSds.
\end{split}
\end{equation}
In view of the decay assumption \eqref{decay} again, we have for large enough $R_1>0$ that
\[\left|(\frac{H}{|H|}\cdot\nabla) J\cdot \frac{H}{|H|}\right|\leq |\nabla J|\leq \frac12|a-\frac{3b}q|,\]
\[\left| J\cdot\frac{y}{|y|}\right|\leq \frac12\frac{|b|R_1}{q}.\]
Now we assume $a-\frac{3b}q>0$ and $-b>0$.
It follows from \eqref{energy-33} that
\begin{equation}\label{conclusion}
\int_0^{S_0}\int_{|y|>R_1} |H|^q\, dyds=\int_0^{S_0}\int_{|y|=R_1}  |H|^q\, dSds=0.
\end{equation}
On the other hand, if $a-\frac{3b}q<0$ and $-b<0$, multiplying $(-1)$ on each side of equation \eqref{energy-33} and the conclusion \eqref{conclusion} follows as well.
Recalling the definition of $a$ and $b$, in order to guarantee $(a-\frac{3b}q)(-b)>0$, we need to have that either $\{-2<\alpha\leq 0\}$ and any $q>0$, or $\alpha>0$ and  $0<q<\frac3\alpha$.

In the end, the property in \eqref{conclusion} implies 
\[H\in \cap_{0<p<q} L^p(\mathbb R^3\times [0,S_0]).\]
Thus the conclusion $H\equiv 0$ follows from Theorem \ref{thm-1}.

\bigskip

\section{Proof of Theorem \ref{thm-4}}
\label{sec-thm4}

In this section, we define the smooth radial cut-off function $\varphi\in C^\infty_0(\mathbb R)$ as
\begin{equation}\notag
\varphi(|x|)=
\begin{cases}
0, \ \ \ |x|<1\\
1, \ \ \ |x|>2
\end{cases}
\end{equation}
satisfying $0\leq \varphi(|x|)\leq 1$ for $1\leq |x|\leq 2$ and $\varphi'\geq 0$. Denote the rescaled non-decreasing cut-off function 
\[\varphi_R(y)=\varphi(|y|/R).\]

In order to prove the statement of Theorem \ref{thm-4}, we first show that the profile $H$ has compact support in $\mathbb R^3\times [0,S_0]$ under the decay assumption of the theorem. We achieve this applying a contradiction argument. As a consequence, we justify the assumption (iii) of Theorem \ref{thm-1} as well as (ii) to conclude the proof.

As before, we start by taking dot product of equation \eqref{eq-H2} with $H\varphi_R$ to obtain 
\begin{equation}\notag
\begin{split}
&\partial_s(\frac12|H|^2\varphi_R)+a|H|^2\varphi_R+b(y\cdot\nabla)H\cdot H\varphi_R\\
&+(H\cdot\nabla)J\cdot H\varphi_R-(J\cdot\nabla)H\cdot H\varphi_R=0.
\end{split}
\end{equation}
Since 
\[b(y\cdot\nabla)H\cdot H\varphi_R=b(y\cdot\nabla)(\frac12|H|^2\varphi_R)-b(y\cdot\nabla)\varphi_R \frac12|H|^2,\]
\[(J\cdot\nabla)H\cdot H\varphi_R=(J\cdot\nabla)(\frac12|H|^2\varphi_R)-(J\cdot\nabla)\varphi_R \frac12|H|^2,\]
it follows
\begin{equation}\label{energy-41}
\begin{split}
&\quad \partial_s(|H|^2\varphi_R)+2a|H|^2\varphi_R+b(y\cdot\nabla)(|H|^2\varphi_R)-(J\cdot\nabla)(|H|^2\varphi_R)\\
&=-2(\frac{H}{|H|}\cdot\nabla)J\cdot \frac{H}{|H|} |H|^2\varphi_R+b(y\cdot\nabla)\varphi_R|H|^2-(J\cdot\nabla)\varphi_R |H|^2.
\end{split}
\end{equation}
We analyze the terms on the right hand side of \eqref{energy-41} in the following. First of all, in view of the assumption \eqref{decay4}, we have
\begin{equation}\label{far-1}
|(\frac{H}{|H|}\cdot\nabla)J\cdot \frac{H}{|H|}| |H|^2\varphi_R\leq |\nabla J(y,s)| |H|^2\varphi_R\leq \frac12a|H|^2\varphi_R
\end{equation}
as $|y|\to\infty$ for any $s\in[0,S_0]$. 
On the other hand, we have for $J_r=J\cdot \frac{y}{|y|}$
\[\frac{J_r(y,s)}{|y|}=\frac{J(y,s)\cdot y}{|y|^2}=\frac{y}{|y|^2}\cdot \left(J(0,s)+ \int_0^1y\cdot\nabla J(ty, s)\, dt \right)\]
and 
\[\frac{|J_r(y,s)|}{|y|}\lesssim |\nabla J(y,s)|\]
as $|y|\to \infty$ for all $s\in[0,S_0]$. Hence it follows from the assumption \eqref{decay4} that 
\[J_r(y)=o(|y|) \ \ \ \mbox{as} \ \ |y|\to \infty.\]
Thus we deduce
\begin{equation}\label{far-2}
|(J\cdot\nabla)\varphi_R|=|J_r\partial_r\varphi_R|\leq -\frac{b}2|y|\partial_r\varphi_R=- \frac{b}2y\cdot\nabla \varphi_R
\end{equation}
as $|y|\to\infty$ for any $s\in[0,S_0]$, where we used $\alpha<-2$ and hence $b=\frac{1}{\alpha+2}<0$.

In the end, noting $\varphi_R$ is radially non-decreasing, we have
\[y\cdot\nabla \varphi_R=y_r|y|\partial_r\varphi_R=|y|\partial_r\varphi_R\geq 0,\]
which implies
\[b(y\cdot\nabla)\varphi_R|H|^2\leq 0.\]

Combining the analysis above with equation \eqref{energy-41} we obtain
\begin{equation}\label{energy-42}
\partial_s(|H|^2\varphi_R)+2a|H|^2\varphi_R+b(y\cdot\nabla)(|H|^2\varphi_R)-(J\cdot\nabla)(|H|^2\varphi_R)\leq 0
\end{equation}
for $R\geq R_0$ and $s\in[0,S_0]$, with $R_0>0$ large enough so that both \eqref{far-1} and \eqref{far-2} are satisfied. 

Now we suppose that $H$ is not compactly supported on $\mathbb R^3\times [0,S_0]$. Thus for any large $R>0$ we can find an open set $\Omega\subset (\mathbb R^3\backslash B_R)\times (0,S_0)$ such that $|H|(\Omega)>0$. Denote the domain 
\[D_{R_0}=(\mathbb R^3\backslash B_{R_0})\times (0,S_0)\]
and its boundary $\partial D_{R_0}=\Gamma_1\cup \Gamma_2\cup \Gamma_3\cup \Gamma_4$ with 
\begin{equation}\notag
\begin{split}
\Gamma_1&=\partial B_{R_0}\times (0,S_0), \ \quad \quad \Gamma_2=\{|y|=\infty\}\times (0,S_0),\\
\Gamma_3&=(\mathbb R^3\backslash B_{R_0})\times \{s=0\}, \ \quad \Gamma_4=(\mathbb R^3\backslash B_{R_0})\times \{s=S_0\}.
\end{split}
\end{equation}
Since $H$ is positive in $\Omega\subset D_{R_0}$, $|H|^2\varphi_{R_0}>0$ on $\Omega$ in view of the definition of $\varphi_{R_0}$. Thus there exists a positive maximum of $|H|^2\varphi_{R_0}$ on $\overline{D}_{R_0}$, denoted by $(y^*, s^*)$. On the other hand, $|H|^2\varphi_{R_0}$ satisfies the inequality \eqref{energy-42} with $a>0$ since $\alpha<-2$. 

We first claim that $(y^*, s^*)\not\in D_{R_0}\cup \Gamma_4$. Otherwise, we have at $(y^*, s^*)$
\[(y\cdot\nabla)(|H|^2\varphi_{R_0})(y^*, s^*)=0,  \ \quad (J\cdot\nabla)(|H|^2\varphi_{R_0})(y^*, s^*)=0, \ \quad (|H|^2\varphi_{R_0})(y^*,s^*)>0\]
and 
\[\lim_{s\to s^*}\partial_s(|H|^2\varphi_{R_0})(y^*, s)\geq 0.\]
This is in contradiction with \eqref{energy-42}.

Since $\varphi_{R_0}=0$ on $\partial B_{R_0}$, we have $|H|^2\varphi_{R_0}=0$ on $\Gamma_1$. Thus $(y^*, s^*)\not\in \Gamma_1$. In view of the assumption \eqref{decay4}, it is clear that $|H|^2\varphi_{R_0}=0$ on $\Gamma_2$ and hence $(y^*, s^*)\not\in \Gamma_2$.

Therefore we must have $(y^*, s^*)\in \Gamma_3$. In this case, we have
\begin{equation}\label{max1}
\sup_{|y|>R_0, 0<s<S_0} |H(y,s)|^2\varphi_{R_0}(y) \leq \sup_{|y|>R_0}|H(y,0)|^2\varphi_{R_0}(y)=|H(y^*,0)|^2\varphi_{R_0}(y^*).
\end{equation}
On the other hand, we have for all $(y,s)\in D_{R_0}\cup \Gamma_4$ 
\begin{equation}\label{max2}
 |H(y,s)|^2\varphi_{R_0}(y) < |H(y^*,0)|^2\varphi_{R_0}(y^*)
\end{equation}
where the strict inequality is due to the fact that $(y^*, s^*)\not\in D_{R_0}\cup \Gamma_4$. Taking $y=y^*$ and $s=S_0$ in \eqref{max2} yields 
\[ |H(y^*, S_0)|^2\varphi_{R_0}(y^*) < |H(y^*,0)|^2\varphi_{R_0}(y^*)\]
which is a contradiction to the periodicity $H(y^*, S_0)=H(y^*, 0)$. Therefore, our assumption that $H$ is not compactly supported is false. 

As a consequence of $H$ being compactly supported, the assumption (iii) of Theorem \ref{thm-1} is naturally satisfied. 

Next we claim that the assumption \eqref{decay4} implies 
\[J(y)=o(|y|) \ \ \ \mbox{as} \ \ |y|\to \infty,\]
that is, the assumption (ii) of Theorem \ref{thm-1} is also satisfied.
This is trivial in 1D. In higher dimension, we denote $\hat y=\frac{y}{|y|}$ and $f(\tau)=J(\tau\hat y)$ for $\tau\in \mathbb R$. It follows
\[f'(\tau)=\nabla J(t\hat y)\cdot \hat y.\]
Since $\nabla J(y)\to 0$ as $|y|\to \infty$, we see that
\[f'(\tau)\to 0 \ \ \ \mbox{as}\ \ \tau\to\infty.\]
Thus 
\[f(\tau)=o(\tau) \ \ \ \mbox{as} \ \ \tau\to \infty,\]
which immediately indicates 
\[J(y)=o(|y|) \ \ \ \mbox{as} \ \ |y|\to \infty.\]

In summary, we showed that under the condition $\alpha<-2$ and \eqref{decay4}, the assumptions of Theorem \ref{thm-1} are satisfied and hence $H\equiv 0$ on $\mathbb R^{3+1}$.

\bigskip

\section{Proof of Theorem \ref{thm-5}}
\label{sec-thm5}

Take $\varphi_R$ to be the smooth radial cut-off function defined in Section \ref{sec-thm1} that is non-increasing. Assume $y=0$ is an extremal point of $J_i(y,s)$ with $i=1,2,3$ for all $s\in(0,S_0)$ on the ball $B_{R_0}$ for some $R_0>0$. 
Denote
\[M=\sup_{0<s<S_0}\|D^2J\|_{L^\infty(B_R)}\]
for sufficiently large $R>R_0$.

The goal is to show that $H\equiv 0$ on $B_{r_k}$ inductively for an increasing sequence $\{r_k\}_{k=0}^{k=N}$ with the last one $r_N\approx R$. Since $R$ can be arbitrarily large, it jutisfies the conclusion of the theorem.

The first step is to find $r_0\leq R_0$ depending on $R_0, M$ and $\alpha$ such that $H\equiv 0$ on $B_{r_0}$. For $y\in B_{r_0}$, we have 
\[ J(0,s)=D J(0,s)=0, \ \ \forall \ \ s\in(0,S_0).\]
It follows that
\begin{equation}\label{est-J1}
|D J(y,s)|=|D J(y,s)-DJ(0,s)|\leq \sup_{0<s<S_0}\|D^2J(s)\|_{L^\infty(B_R)}|y|\leq r_0M
\end{equation}
for $y\in B_{r_0}$, and 
\begin{equation}\label{est-J2}
\begin{split}
|J(y,s)|&=|J(0,s)-\int_0^1y\cdot \nabla J(\tau y,s)\, d\tau|\\
&\leq |y| \sup_{|z|\leq |y|}|DJ(z,s)|\\
&\leq |y|r_0M
\end{split}
\end{equation}
for any $(y,s)\in B_{r_0}\times (0,S_0)$.

Taking dot product of the profile equation \eqref{eq-H2} with $H\varphi_{r_0}$ yields
\begin{equation}\label{energy-51}
\begin{split}
&\quad\partial_s(|H|^2\varphi_{r_0})+2a|H|^2\varphi_{r_0}+b(y\cdot\nabla)(|H|^2\varphi_{r_0})-(J\cdot\nabla)(|H|^2\varphi_{r_0})\\
&=-2(H\cdot\nabla)J\cdot H\varphi_{r_0}+b(y\cdot\nabla \varphi_{r_0}) |H|^2-(J\cdot\nabla \varphi_{r_0}) |H|^2.
\end{split}
\end{equation}
The right hand side of \eqref{energy-51} can be bounded as follows, using \eqref{est-J1} and \eqref{est-J2}
\begin{equation}\label{energy-52}
\begin{split}
&\quad -2(H\cdot\nabla)J\cdot H\varphi_{r_0}+b(y\cdot\nabla \varphi_{r_0}) |H|^2-(J\cdot\nabla \varphi_{r_0}) |H|^2\\
&\leq 2|\nabla J| |H|^2\varphi_{r_0}+b(y\cdot\nabla \varphi_{r_0}) |H|^2- J_r\partial_r \varphi_{r_0} |H|^2\\
&\leq 2r_0M |H|^2\varphi_{r_0}+b(y\cdot\nabla \varphi_{r_0}) |H|^2- |y|\partial_r \varphi_{r_0} |H|^2r_0M\\
&= 2r_0M |H|^2\varphi_{r_0}+b(y\cdot\nabla \varphi_{r_0}) |H|^2- (y\cdot\nabla \varphi_{r_0}) |H|^2r_0M\\
&\leq a |H|^2\varphi_{r_0}+\frac12b(y\cdot\nabla \varphi_{r_0}) |H|^2
\end{split}
\end{equation}
provided $2r_0M\leq a$ and $r_0M\leq \frac12b$. Thus, for $\alpha>0$, taking 
\[r_0=\min\left\{R_0, \frac{\alpha}{2M(\alpha+2)},\frac{1}{2M(\alpha+2)} \right \}\]
and combining \eqref{energy-51}-\eqref{energy-52}, we deduce
\begin{equation}\label{energy-53}
\begin{split}
&\quad\partial_s(|H|^2\varphi_{r_0})+a|H|^2\varphi_{r_0}+b(y\cdot\nabla)(|H|^2\varphi_{r_0})-(J\cdot\nabla)(|H|^2\varphi_{r_0})\\
&\leq \frac12b(y\cdot\nabla \varphi_{r_0}) |H|^2.
\end{split}
\end{equation}
Since $\varphi_{r_0}$ is non-increasing, we have
\[b(y\cdot\nabla \varphi_{r_0})=b|y|\partial_r \varphi_{r_0}\leq 0.\]
Thus it follows from \eqref{energy-53} that
\begin{equation}\label{energy-54}
\partial_s\Phi+a\Phi+b(y\cdot\nabla)\Phi-(J\cdot\nabla)\Phi\leq 0
\end{equation}
with $\Phi (y,s):=|H(y,s)|^2\varphi_{r_0}(y)$. For $q>2$, multiplying \eqref{energy-54} by $|\Phi|^{q-2}\Phi$ and integrating over $B_{r_0}\times [0,S_0]$ we obtain
\begin{equation}\label{energy-55}
\begin{split}
&\quad \frac1q \int_0^{S_0}\int_{B_{r_0}}\partial_s |\Phi|^q\, dyds+a\int_0^{S_0}\int_{B_{r_0}} |\Phi|^q\, dyds\\
&+\frac{b}{q}\int_0^{S_0}\int_{B_{r_0}} (y\cdot\nabla)|\Phi|^q\, dyds-\frac{1}{q}\int_0^{S_0}\int_{B_{r_0}} (J\cdot\nabla)|\Phi|^q\, dyds\leq 0.
\end{split}
\end{equation}
Applying periodicity of $H$ in $s$, it is clear that 
\[\int_0^{S_0}\int_{B_{r_0}}\partial_s |\Phi|^q\, dyds=\int_{B_{r_0}} |\Phi|^q(y,S_0)\, dy-\int_{B_{r_0}} |\Phi|^q(y,0)\, dy=0.\]
Using integration by parts, we have
\begin{equation}\notag
\begin{split}
\int_0^{S_0}\int_{B_{r_0}} (y\cdot\nabla)|\Phi|^q\, dyds&=\int_0^{S_0}\int_{B_{r_0}}\div (y|\Phi|^q)\, dyds-\int_0^{S_0}\int_{B_{r_0}}(\div y)|\Phi|^q\, dyds\\
&=-3\int_0^{S_0}\int_{B_{r_0}}|\Phi|^q\, dyds,
\end{split}
\end{equation}
and 
\begin{equation}\notag
\begin{split}
\int_0^{S_0}\int_{B_{r_0}} (J\cdot\nabla)|\Phi|^q\, dyds&=\int_0^{S_0}\int_{B_{r_0}}\div (J|\Phi|^q)\, dyds-\int_0^{S_0}\int_{B_{r_0}}(\div J)|\Phi|^q\, dyds\\
&=0.
\end{split}
\end{equation}
Combining the last three equations and the energy inequality \eqref{energy-55} yields
\begin{equation}\notag
(a-\frac{3b}q)\int_0^{S_0}\int_{B_{r_0}} |\Phi|^q\, dyds\leq 0.
\end{equation}
Choosing $q>\frac{3b}{a}$, the last inequality implies $\Phi\equiv 0$ and hence $H\equiv 0$ on $B_{r_0}$.

Next we define the increasing sequence $\{r_k\}$ iteratively
\[r_k=r_{k-1}+m, \ \ \ k=1, 2, ..., N\]
with $m=\min\{\frac{\alpha}{2M(\alpha+2)}, \frac{1}{2M(\alpha+2)}\}$, and $N$ being the smallest integer such that $r_N=r_0+Nm\geq R$. We assume that 
$H\equiv 0$ on $B_{r_{k-1}}\times (0,S_0)$, and aim to show that $H\equiv 0$ on $B_{r_{k}}\times (0,S_0)$. 

For $y\in B_{r_k}$, denote $\bar y=\frac{y}{|y|}r_{k-1}\in\partial B_{r_{k-1}}$. Since $H\equiv 0$ on $B_{r_{k-1}}\times (0,S_0)$, we have
\begin{equation}\label{est-J1-k}
|D J(y,s)|\leq |y-\bar y|\sup_{0<s<S_0}\|D^2J(s)\|_{L^\infty(B_R)}\leq (r_k-r_{k-1})M\leq mM
\end{equation}
for $y\in B_{r_k}$, and 
\begin{equation}\label{est-J2-k}
\begin{split}
|J(y,s)|&=|J(\bar y,s)-\int_0^1(y-\bar y)\cdot \nabla J(\tau (y-\bar y)+\bar y,s)\, d\tau|\\
&\leq |y-\bar y| \sup_{|z|\leq |y|}|DJ(z,s)|\\
&\leq |y-\bar y|mM\\
&\leq |y|mM
\end{split}
\end{equation}
for any $(y,s)\in B_{r_k}\times (0,S_0)$. Now taking dot product of \eqref{eq-H2} with $H\varphi_{r_k}$ and applying \eqref{est-J1-k}-\eqref{est-J2-k} gives
\begin{equation}\label{energy-57}
\begin{split}
&\quad\partial_s(|H|^2\varphi_{r_k})+2a|H|^2\varphi_{r_k}+b(y\cdot\nabla)(|H|^2\varphi_{r_k})-(J\cdot\nabla)(|H|^2\varphi_{r_k})\\
&=-2(H\cdot\nabla)J\cdot H\varphi_{r_k}+b(y\cdot\nabla \varphi_{r_k}) |H|^2-(J\cdot\nabla \varphi_{r_k}) |H|^2\\
&\leq 2|\nabla J| |H|^2\varphi_{r_k}+b(y\cdot\nabla \varphi_{r_k}) |H|^2- J_r\partial_r \varphi_{r_k} |H|^2\\
&\leq 2mM |H|^2\varphi_{r_k}+b(y\cdot\nabla \varphi_{r_k}) |H|^2- (y\cdot\nabla \varphi_{r_k}) |H|^2mM\\
&\leq a |H|^2\varphi_{r_k}+\frac12b(y\cdot\nabla \varphi_{r_k}) |H|^2
\end{split}
\end{equation}
where in the last step we used  $2mM\leq a$ and $mM\leq \frac12b$ thanks to the choice of $m$. Again, since $b(y\cdot\nabla \varphi_{r_k})\leq 0$, it follows from \eqref{energy-57} that
\[\partial_s(|H|^2\varphi_{r_k})+a|H|^2\varphi_{r_k}+b(y\cdot\nabla)(|H|^2\varphi_{r_k})-(J\cdot\nabla)(|H|^2\varphi_{r_k})\leq 0.\]
Therefore, following a similar argument as on $B_{r_0}$, we conclude that $H\equiv 0$ on $B_{r_k}$. By induction, we can show that $H\equiv 0$ on $B_{r_N}$ with $r_N\geq R$. Since $R$ can be arbitrarily large, we have $H\equiv 0$ on $\mathbb R^{3+1}$.

\bigskip

\section{Proof of Theorem \ref{thm-6}}
\label{sec-thm6}

We need the following iterative lemma proved in \cite{CW-23}. 

\begin{Lemma}\label{le-iterative}
For $k\in \mathbb N\cup \{0\}$, let $\beta_k: [a,b]\to \mathbb R$ be continuous functions. Assume that there exists a constant $M>0$ such that
\[\sup_{t\in [a,b]} |\beta_k(t)|\leq M^k \ \ \forall \ k\in \mathbb N\cup \{0\}.\]
Let $\alpha:[a,b]\to\mathbb R$ be a nonnegative function satisfying $\alpha\in L^1(a,b)$. Moreover, assume that there exists a constant $C>0$ such that 
\begin{equation}\notag
\beta_k(t)\leq C+\int_a^t \alpha(s)\beta_{k+1}(s)\, ds, \ \ \forall \k\in \mathbb N\cup \{0\}.
\end{equation}
Then the estimate 
\[\beta_0(t)\leq C e^{\int_a^t \alpha(s)\, ds}\]
holds for all $t\in[a,b]$.
\end{Lemma}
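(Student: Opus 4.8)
The plan is to iterate the recursive inequality a finite number of times and then pass to the limit. Writing $A(t)=\int_a^t\alpha(s)\,ds$ (which is finite since $\alpha\in L^1$), I would start from $\beta_0(t)\leq C+\int_a^t\alpha(s_1)\beta_1(s_1)\,ds_1$ and substitute the corresponding bound for $\beta_1$ inside the integral. The essential point, used at every substitution, is that $\alpha\geq 0$: multiplying the inequality for $\beta_{k+1}$ by $\alpha\geq 0$ and integrating in $s$ preserves the direction of the inequality. After $n$ substitutions one is led to
\[\beta_0(t)\leq C\sum_{j=0}^{n-1}I_j(t)+R_n(t),\]
where $I_j(t)$ denotes the $j$-fold iterated integral of $\alpha$ over the simplex $a\leq s_j\leq\cdots\leq s_1\leq t$ (with the convention $I_0=1$), and $R_n(t)$ is the analogous $n$-fold iterated integral carrying $\beta_n$ in the innermost slot.

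The second step is the evaluation $I_j(t)=\frac{A(t)^j}{j!}$, the standard identity for iterated integrals over a simplex, provable by induction on $j$ (differentiate both sides in $t$). Consequently the finite sum is precisely the partial sum $C\sum_{j=0}^{n-1}\frac{A(t)^j}{j!}$ of the exponential series. For the remainder, the hypothesis $\sup_{t}|\beta_n|\leq M^n$ combined with $\alpha\geq 0$ gives the bound
\[R_n(t)\leq M^n I_n(t)=\frac{(MA(t))^n}{n!},\]
which tends to $0$ as $n\to\infty$, since $MA(t)$ is a fixed finite number and the factorial eventually dominates any geometric growth.

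Letting $n\to\infty$ in the displayed inequality then yields $\beta_0(t)\leq C e^{A(t)}=C e^{\int_a^t\alpha(s)\,ds}$, which is the assertion. I expect the only delicate points to be the bookkeeping of the iterated-integral structure—correctly identifying the simplex domain of $I_j$ and verifying $I_j=A(t)^j/j!$—and confirming that the sign hypothesis $\alpha\geq 0$ is genuinely invoked at each iteration, so that none of the inequalities is inadvertently reversed. Once these are in place, the factorial decay of $R_n$ makes the passage to the limit immediate, and the role of the growth bound $|\beta_k|\leq M^k$ is exactly to guarantee that this remainder vanishes.
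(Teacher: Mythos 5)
Your proof is correct: the iteration of the recursive inequality (using $\alpha\geq 0$ at each substitution), the simplex identity $I_j(t)=A(t)^j/j!$, and the remainder bound $R_n(t)\leq (MA(t))^n/n!\to 0$ together give exactly $\beta_0(t)\leq Ce^{\int_a^t\alpha(s)\,ds}$. The paper itself offers no proof of this lemma --- it is quoted from the cited reference of Chae and Wolf --- and the argument given there is this same standard iteration-to-exponential-series scheme, so your approach matches the intended one; the only point worth polishing is that, since $\alpha$ is merely $L^1$, the inductive verification of $I_j(t)=A(t)^j/j!$ should invoke the absolute continuity of $A$ (or the symmetrization of the cube $[a,t]^j$ into $j!$ simplices) rather than pointwise differentiation.
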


Applying Lemma \ref{le-iterative}, we establish the following local energy estimate for the electron MHD which may be interesting by itself. 

\begin{Proposition}\label{prop-energy}
Let $a,b\in \mathbb R$ with $a<b$, and $B\in C^1([a,b), C^2(\mathbb R^3))$ be a solution to the non-resistive electron MHD \eqref{emhd} with $\nu=0$. Assume 
\begin{equation}\label{ass-61}
\sup_{s\in[a,t]}\|(1+|x|)^{-2}B(s)\|_{L^\infty(\mathbb R^3)}<\infty, \ \ \forall \ t\in[a,b), 
\end{equation}
\begin{equation}\label{ass-62}
\int_a^b\|(1+|x|)^{-1}\nabla\times B(s)\|_{L^\infty(\mathbb R^3)}\, ds <\infty.
\end{equation}
Then we have
\begin{equation}\label{energy-local1}
r^{-7}\|B(t)\|_{L^2(B_{r})}^2\leq c \|(r+|x|)^{-2}B(a)\|_{L^\infty(\mathbb R^3)}^2e^{c\int_a^t \|(r+|x|)^{-1}\nabla\times B(s)\|_{L^\infty(\mathbb R^3)}\, ds}
\end{equation}
for all $t\in(a,b)$ and all $r>0$. 
\end{Proposition}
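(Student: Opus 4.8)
The plan is to derive a differential inequality for the local weighted energy $\int |B|^2\varphi_\rho$ on concentric balls, and then to feed a dyadic family of such inequalities into the iterative Lemma~\ref{le-iterative}. Throughout I write $J=\nabla\times B$ and let $\varphi_\rho(x)=\varphi(|x|/\rho)$ be the cut-off of Section~\ref{sec-thm1}, so that $\varphi_\rho\equiv 1$ on $B_\rho$, $\mathrm{supp}\,\varphi_\rho\subset B_{2\rho}$, and $|\nabla\varphi_\rho|\le \|\varphi'\|_{L^\infty}/\rho$ is supported on the annulus $\rho\le|x|\le 2\rho$. Because $\varphi_\rho$ is compactly supported and $B\in C^2$, all integrations by parts below are over a compact set and carry no boundary terms.

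The first step is the local energy inequality. Taking the inner product of \eqref{emhd} with $B\varphi_\rho$ and integrating by parts, I would obtain
\[\tfrac12\tfrac{d}{dt}\int_{\mathbb R^3}|B|^2\varphi_\rho\,dx = -\int_{\mathbb R^3}(J\times B)\cdot\nabla\times(\varphi_\rho B)\,dx.\]
Since $\nabla\times(\varphi_\rho B)=\varphi_\rho J+\nabla\varphi_\rho\times B$ and $(J\times B)\cdot J=0$, the bulk term cancels and only the commutator survives:
\[\tfrac12\tfrac{d}{dt}\int_{\mathbb R^3}|B|^2\varphi_\rho\,dx = -\int_{\mathbb R^3}(J\times B)\cdot(\nabla\varphi_\rho\times B)\,dx.\]
Estimating the right-hand side by $|J||\nabla\varphi_\rho||B|^2$, using the support of $\nabla\varphi_\rho$, and integrating in time gives, with $E(\rho,t):=\int|B(t)|^2\varphi_\rho\,dx$,
\[E(\rho,t)\le E(\rho,a)+\frac{C}{\rho}\int_a^t\|J(s)\|_{L^\infty(B_{2\rho})}\,E(2\rho,s)\,ds,\]
where $\int_{\rho<|x|<2\rho}|B|^2\le E(2\rho,s)$ was used.

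The second step is the dyadic iteration. I would set $\rho_k=2^kr$ and use the two gains $\|J\|_{L^\infty(B_{2\rho_k})}\le (r+2\rho_k)\,\|(r+|x|)^{-1}J\|_{L^\infty}$ and $\frac{r+2\rho_k}{\rho_k}=2^{-k}+2\le 3$, so that anchoring the geometric radii at the scale $r$ keeps the recursion constant bounded independently of $k$. Defining $\beta_k(t)=\lambda_k E(\rho_k,t)$ with $\lambda_k\sim 2^{-7k}r^{-7}$, the data weight $(r+|x|)^{-2}$ together with the volume bound $\int_{B_{2\rho_k}}(r+|x|)^4\,dx\lesssim 2^{7k}r^7$ yields $\beta_k(a)\le c\|(r+|x|)^{-2}B(a)\|_{L^\infty}^2=:C_0$ uniformly in $k$; since $\lambda_k/\lambda_{k+1}=2^7$ is constant, the inequality above turns into exactly
\[\beta_k(t)\le C_0+\int_a^t\alpha(s)\,\beta_{k+1}(s)\,ds,\qquad \alpha(s)=c\|(r+|x|)^{-1}J(s)\|_{L^\infty},\]
with $\alpha\in L^1(a,t)$ by \eqref{ass-62}. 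The crude bound $\sup_{s\in[a,t]}\beta_k(s)\le M^k$ then follows from \eqref{ass-61}, once more because $\lambda_k$ exactly balances the volume growth $2^{7k}r^7$, so $\beta_k$ is uniformly bounded in $k$ (the $M^k$ form of Lemma~\ref{le-iterative} holding after a harmless normalization). Applying the lemma gives $\beta_0(t)\le C_0\,e^{\int_a^t\alpha}$, and since $\lambda_0\sim r^{-7}$ and $E(\rho_0,t)\ge\|B(t)\|_{L^2(B_r)}^2$, this is precisely \eqref{energy-local1}.

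I expect the main obstacle to be choosing the radii and normalizations $\lambda_k$ so that all three hypotheses of Lemma~\ref{le-iterative} hold at once. The cancellation $(J\times B)\cdot J=0$ is immediate; the real content is the bookkeeping of weights across scales. The geometric sequence $\rho_k=2^kr$ anchored at $r$ is what forces both the prefactor $\frac{r+2\rho_k}{\rho_k}$ and the ratio $\lambda_k/\lambda_{k+1}$ to stay bounded, while the single power $r^7$ built into $\lambda_k$ must simultaneously reconcile the initial-data bound (through the $(r+|x|)^{-2}$ weight) with the uniform-in-$k$ crude bound (through the $(1+|x|)^{-2}$ weight of \eqref{ass-61}); getting this one exponent consistent across all three conditions is the delicate part.
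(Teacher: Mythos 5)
Your proposal is correct and follows essentially the same route as the paper: the same local energy identity exploiting $((\nabla\times B)\times B)\cdot(\nabla\times B)=0$, the same dyadic iteration with weights $\sim 2^{-7k}$ (the exponent $7=3+4$ balancing volume against the squared weight), and the same application of Lemma~\ref{le-iterative}. The only difference is cosmetic: the paper proves the case $r=1$ with weights $(1+|x|)$ and then obtains general $r$ by applying the estimate to the rescaled solution $\tilde B(x,t)=r^{-2}B(rx,t)$ (the $\alpha=-2$ scaling, which leaves time unchanged), whereas you run the iteration directly at radii $2^k r$ with weights anchored at $r$ --- equivalent bookkeeping.
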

\begin{proof}
Denote the ball $\Omega=B_R(x_0)$ for some fixed $x_0$ and $R$. Let $a,b\in \mathbb R$ and $a<b$. For $\varphi \in C_c^\infty (\Omega)$ and $t\in(a,b)$, we have 
\begin{equation}\notag
\int_a^t\int_{\Omega} \partial_t B\cdot B\varphi^2 dxds+\int_a^t\int_{\Omega} \nabla\times ((\nabla\times B)\times B)\cdot B\varphi^2 dxds=0.
\end{equation}
Note 
\begin{equation}\notag
\int_a^t\int_{\Omega} \partial_t B\cdot B\varphi^2 dxds=\frac12\int_{\Omega} |B(t)|^2\varphi^2 dx-\frac12\int_{\Omega} |B(a)|^2\varphi^2 dx,
\end{equation}
\begin{equation}\notag
\begin{split}
&\quad \int_a^t\int_{\Omega} \nabla\times ((\nabla\times B)\times B)\cdot B\varphi^2 dxds\\
&=\int_a^t\int_{\Omega} ((\nabla\times B)\times B)\cdot \nabla\times (B\varphi^2) dxds\\
&=\int_a^t\int_{\Omega} ((\nabla\times B)\times B)\cdot (\nabla\times B)\varphi^2 dxds-2\int_a^t\int_{\Omega} ((\nabla\times B)\times B)\cdot (B\times \nabla\varphi)\varphi dxds\\
&=-2\int_a^t\int_{\Omega} ((\nabla\times B)\times B)\cdot (B\times \nabla\varphi)\varphi dxds,
\end{split}
\end{equation}
hence we have
\begin{equation}\label{energy-61}
\int_{\Omega} |B(t)|^2\varphi^2 dx=\int_{\Omega} |B(a)|^2\varphi^2 dx+4\int_a^t\int_{\Omega} ((\nabla\times B)\times B)\cdot (B\times \nabla\varphi)\varphi dxds.
\end{equation}

Denote $r_k=2^k$ for $k\in \mathbb N\cup \{0\}$ and $\rho_k=\frac{2r_k+2r_{k-1}}{4}=\frac34r_k$. Clearly we have $r_{k-1}<\rho_k<r_k$. Choose the smooth cut-off function $\varphi_k\in C_c^\infty (B_{\rho_k})$ satisfying 
\[0\leq \varphi_k\leq 1 \ \ \mbox{on} \ B_{\rho_k}; \ \ \varphi_k\equiv 1 \ \ \mbox{on} \ \ B_{r_{k-1}}.\]
It is easy to see that 
\[|\nabla\varphi_x|\leq c 2^{-k} \ \ \mbox{in} \ \ B_{\rho_k}.\]

Taking $\Omega=B_{r_k}$ and $\varphi=\varphi_k$ in \eqref{energy-61} and noting
\begin{equation}\notag
\begin{split}
&\quad\int_a^t\int_{B_{r_k}} |((\nabla\times B)\times B)\cdot (B\times \nabla\varphi)\varphi| dxds\\
&\leq c2^{-k}\int_a^t\int_{B_{r_k}} |\nabla \times B| |B|^2 dxds\\
&\leq c2^{-k}\int_a^t\|\nabla\times B\|_{L^\infty(B_{r_k})} \|B\|_{L^2(B_{r_k})}^2ds,
\end{split}
\end{equation}
we have 
\begin{equation}\label{energy-62}
\begin{split}
&\quad \|B(t)\|_{L^2(B_{r_{k-1}})}^2\\
&\leq c \|B(a)\|_{L^2(B_{r_{k}})}^2+c2^{-k}\int_a^t\|\nabla\times B\|_{L^\infty(B_{r_k})} \|B\|_{L^2(B_{r_k})}^2ds\\
&\leq c \|B(a)\|_{L^2(B_{r_{k}})}^2+c\int_a^t\|(1+|x|)^{-1}\nabla\times B\|_{L^\infty(\mathbb R^3)} \|B\|_{L^2(B_{r_k})}^2ds.
\end{split}
\end{equation}

Denote $\beta_k(t)=2^{-7k}\|B(t)\|_{L^2(B_{r_k})}^2$ and $\alpha(t)=\|(1+|x|)^{-1}\nabla\times B(t)\|_{L^\infty(\mathbb R^3)}$. It follows from \eqref{energy-62} that
\[ 2^{7(k-1)}\beta_{k-1}(t)\leq c 2^{7k}\beta_k(a)+c\int_a^t 2^{7k}\beta_k(s)\alpha(s)\, ds\]
and hence
\begin{equation}\label{energy-63}
\beta_{k-1}(t)\leq 128c\beta_k(a)+128c\int_a^t \beta_k(s)\alpha(s)\, ds.
\end{equation}
On the other hand, we have
\begin{equation}\notag
\begin{split}
\beta_k(t)&=2^{-7k}\int_{B_{r_k}} |B(x,t)|^2\, dx\\
&\leq 2^{-7k+3k} \|B(t)\|_{L^\infty(B_{r_k})}^2\\
&\leq c \|(1+|x|)^{-2}B(t)\|_{L^\infty(\mathbb R^3)}^2.
\end{split}
\end{equation}
In view of the assumption 
\[\sup_{s\in[a,t]}\|(1+|x|)^{-2}B(s)\|_{L^\infty(\mathbb R^3)}<\infty, \ \ \forall \ t\in[a,b), \]
there exists some function $C(t)>0$ such that
\[\beta_k(s)\leq C(t), \ \ \forall \ k\geq 0, \ \ \forall \ t\in[a,b). \]
Thus applying Lemma \ref{le-iterative} to \eqref{energy-63} gives
\begin{equation}\label{energy-64}
\beta_0(t)\leq 128c \|(1+|x|)^{-2}B(a)\|_{L^\infty(\mathbb R^3)}^2e^{\int_a^t\alpha(s)\, ds}.
\end{equation}
Recall the scaling $B_\lambda(x,t)=\lambda^{\alpha}B(\lambda x,\lambda^{\alpha+2}t)$ and in particular
$B_\lambda(x,t)=\lambda^{-2}B(\lambda x,t)$ for $\alpha=-2$. Thus, $\tilde B(x,t)=: r^{-2}B(rx,t)$ is also a solution to the electron MHD and satisfies the energy inequality \eqref{energy-64}.
Therefore plugging $\tilde B$ in the energy inequality \eqref{energy-64} and rescaling it gives the local energy inequality \eqref{energy-local1}.

\end{proof}

We are now ready to prove Theorem \ref{thm-6}. For any $\epsilon>0$ (fixed), the assumption \eqref{non-decay} implies that there exists $y_0>1$ such that
\[|H(y,s)|\leq \epsilon |y|^2 \ \ \forall \ \ (y,s)\in\mathbb R^3\times \mathbb R \ \ \mbox{with} \ |y|\geq y_0.\]
Recalling \eqref{self-blowup}, we have for $|x|\geq y_0(-t)^{\frac1{\alpha+2}}$, 
\begin{equation}\notag
\begin{split}
(1+|x|)^{-2}|B(x,t)|&\leq (1+|x|)^{-2}(-t)^{-\frac{\alpha}{\alpha+2}}\epsilon |y|^2\\
&\leq (1+|x|)^{-2}(-t)^{-\frac{\alpha}{\alpha+2}}\epsilon (-t)^{-\frac2{\alpha+2}}|x|^2\\
&\leq \epsilon (-t)^{-1}.
\end{split}
\end{equation}
While for $|x|< y_0(-t)^{\frac1{\alpha+2}}$, it follows from the assumption \eqref{non-decay} that 
\[\sup_{s\in\mathbb R} |H(y,s)|\leq c (1+|y|)^2\]
for some constant $c_0>0$, and hence
\begin{equation}\notag
\begin{split}
(1+|x|)^{-2}|B(x,t)|&\leq (1+|x|)^{-2}(-t)^{-\frac{\alpha}{\alpha+2}}c_0 (1+|y|)^2\\
&\leq (1+|x|)^{-2}(-t)^{-\frac{\alpha}{\alpha+2}}c_0 (1+ (-t)^{-\frac{1}{\alpha+2}}|x|)^2\\
&\leq (1+|x|)^{-2}(-t)^{-\frac{\alpha}{\alpha+2}}c_0 (1+ y_0)^2\\
&\leq c_0 (1+ y_0)^2 (-t)^{-\frac{2}{\alpha+2}}(-t)^{-1}\\
&\leq \epsilon (-t)^{-1}
\end{split}
\end{equation}
where the last step is obtained by taking $a_1\in (-1,0)$ such that 
\[c_0 (1+ y_0)^2 (-a_1)^{-\frac{2}{\alpha+2}}\leq \epsilon.\]
Therefore we have
\[\| (1+|x|)^{-2}B(t)\|_{L^\infty(\mathbb R^3)}\leq \epsilon (-t)^{-1} \ \ \forall \ t\in[a_1,0),\]
and hence the assumption \eqref{ass-61} is satisfied for $t\in[a_1,0)$.

On the other hand, it follows from the assumption \eqref{non-decay} that there exists $y_0>1$ such that
\[|\nabla\times H(y,s)|\leq \epsilon |y| \ \ \forall \ \ (y,s)\in\mathbb R^3\times \mathbb R \ \ \mbox{with} \ |y|\geq y_0.\]
Similar analysis as above shows that 
\[\| (1+|x|)^{-1}\nabla\times B(t)\|_{L^\infty(\mathbb R^3)}\leq \epsilon (-t)^{-1} \ \ \forall \ t\in[a_2,0),\]
for some $a_2\in (-1,0)$. Take $a=\max\{a_1,a_2\}$. Thus both assumptions \eqref{ass-61} and \eqref{ass-62} are satisfied with such $a$ and $b=0$. It then follows from \eqref{energy-local1} with $r=1$ that
\begin{equation}\label{energy-local2}
\begin{split}
\|B(\tau)\|_{L^2(B_1)}^2&\leq c\|(1+|x|)^{-2}B(t)\|_{L^\infty(\mathbb R^3)}^2 e^{c\int_t^\tau \|(1+|x|)^{-1}\nabla\times B(s)\|_{L^\infty(\mathbb R^3)}\, ds}\\
&\leq c \epsilon (-t)^{-2} e^{c\epsilon(-\log (-\tau)+\log(-t))}\\
&\leq c \epsilon (-t)^{-2} e^{c\epsilon(-\log (-\tau))}\\
&\leq c \epsilon (-t)^{-2} (-\tau)^{-c\epsilon}.
\end{split}
\end{equation}

For any $k\in \mathbb N$, we have
\[B(x,t)=\lambda^{k\alpha}B(\lambda^kx, \lambda^{k(\alpha+2)}t)\]
which indicates after rescaling in time
\[B(x, \lambda^{-k(\alpha+2)}t)=\lambda^{k\alpha}B(\lambda^kx, t)\]
and hence 
\[B(\lambda^kx, t)=\lambda^{-k\alpha}B(x, \lambda^{-k(\alpha+2)}t).\]
It then follows that
\begin{equation}\label{energy-local3}
\begin{split}
\|B(t)\|_{L^2(B_{\lambda^k})}^2&=\int_{B_{\lambda^k}}B^2(x,t)\, dx=\lambda^{3k}\int_{B_1}B^2(\lambda^kz,t)\, dz\\
&=\lambda^{3k-2k\alpha}\int_{B_1}B^2(z,\lambda^{-k(\alpha+2)}t)\, dz.
\end{split}
\end{equation}
Applying \eqref{energy-local2} with $\tau=\lambda^{-k(\alpha+2)}t$ to \eqref{energy-local3} yields
\begin{equation}\label{energy-local4}
\begin{split}
\|B(t)\|_{L^2(B_{\lambda^k})}^2&\leq \lambda^{3k-2k\alpha} c\epsilon(-t)^{-2}(-\lambda^{-k(\alpha+2)}t)^{-c\epsilon}\\
&=c\epsilon \lambda^{3k-2k\alpha+c\epsilon k(\alpha+2)}(-t)^{-2-c\epsilon}.
\end{split}
\end{equation}
Since $\alpha>\frac32$, we can choose $\epsilon>0$ small enough (independent of $k$) such that we have $3-2\alpha+c\epsilon (\alpha+2)<0$. Therefore, taking $k\in \infty$, we have from \eqref{energy-local4}
\[\|B(t)\|_{L^2(B_{\lambda^k})}^2 \to 0.\]
By translation in time, we have $B\equiv 0$ and hence $H\equiv 0$ on $\mathbb R^3\times \mathbb R$.







\bigskip

\end{document}